 \tikzset{
  symbol/.style={
    draw=none,
    every to/.append style={
      edge node={node [sloped, allow upside down, auto=false]{$#1$}}}
      }
      }
	\theoremstyle{definition} 
	\newtheorem{definition}{Definition}[section]
	\newtheorem{example}[definition]{Example}
	\theoremstyle{plain} 
	\newtheorem{theorem}[definition]{Theorem}
	\newtheorem{lemma}[definition]{Lemma}
	\newtheorem{proposition}[definition]{Proposition}
	\newtheorem{corollary}[definition]{Corollary}
	\theoremstyle{remark} 
	\newtheorem{remark}[definition]{Remark}
\renewcommand{\AA}{\mathbb A}
\newcommand{\CC}{\mathbb C}
\newcommand{\QQ}{{\mathbb Q}}
\newcommand{\TT}{{\mathbb T}}
\newcommand{\ZZ}{{\mathbb Z}}
\newcommand{\cH}{{\mathcal H}}
\newcommand{\frakM}{\mathfrak M}
\newcommand{\fraka}{\mathfrak a}
\newcommand{\frakp}{\mathfrak p}
\newcommand{\Qbar}{\overline{\QQ}}
\newcommand{\Qpbar}{\Qbar_p}
\newcommand{\cris}{\mathrm{cris}}
\newcommand{\defeq}{\colonequals} 
\newcommand{\isom}{\cong} 
\newcommand{\inj}{\hookrightarrow}
\newcommand{\directsum}{\oplus} 
\newcommand{\tensor}{\otimes} 
\DeclareMathOperator\GL{GL} 
\newcommand{\ord}{\mathrm{ord}} 
\DeclareMathOperator\Ind{Ind} 
\newcommand{\id}{\mathrm{id}} 
\DeclareMathOperator\val{val} 
\DeclareMathOperator\Gal{Gal} 
\DeclareMathOperator\Tr{Tr} 
\DeclareMathOperator\Nm{Nm} 
\DeclareMathOperator\Frob{Frob} 
\newcommand{\cyc}{\mathrm{cyc}}
\DeclareMathOperator\loc{loc}
\DeclareSymbolFont{cyrletters}{OT2}{wncyr}{m}{n}
\DeclareMathSymbol{\Sha}{\mathalpha}{cyrletters}{"58} 
\DeclareRobustCommand{\SkipTocEntry}[5]{} 
\newcounter{counter}
\begin{document}
\title[Fourier coefficients of the generalized eigenform for a CM form]
{Fourier coefficients of the overconvergent generalized eigenform associated to a CM form}
\author{Chi-Yun Hsu}
\address{Department of Mathematics, Harvard University, 1 Oxford Street, Cambridge, MA 02138, USA}
\email{chiyun@math.harvard.edu}
\curraddr{Department of Mathematics, University of California, Los Angeles, Los Angeles, CA 90095, USA}
\email{cyhsu@math.ucla.edu}

\date{\today}


\begin{abstract}
Let $f$ be a modular form with complex multiplication.
If $f$ has critical slope, then Coleman's classicality theorem implies that there is a $p$-adic overconvergent generalized Hecke eigenform with the same Hecke eigenvalues as $f$. 
We give a formula for the Fourier coefficiets of this generalized Hecke eigenform.
We also investigate the dimension of the generalized Hecke eigenspace of $p$-adic overconvergent forms containing $f$.
\end{abstract}

\maketitle

\tableofcontents



\section{Introduction}
Let $f$ be a modular form of weight $k\geq2$.
Assume that $f$ has complex multiplication (CM) and is of critical slope.
Then it is known that $f$ is inside the image of the operator
$\theta^{k-1}$ on $p$-adic overconvergent modular forms, where $\theta = q\frac{d}{dq}$ on $q$-expansions (\cite[Prop.\ 7.1]{Col96}).
One can deduce from Coleman's classicality theorem (\cite[Thm.\ 7.2]{Col96}) that there exists an overconvergent generalized Hecke eigenform $f'$ with the same Hecke eigenvalues as $f$, but $f'$ is not a scalar multiple of $f$.
Since a CM modular form has very explicit description of its Fourier coefficients, it is feasible that it is also possible to explicitly describe the Fourier coefficients of $f'$.
The main theorem of this paper expresses the Fourier coefficients of $f'$ in terms of the value of certain Galois cohomology classes.

Note that in an analogous situation where $f$ is of weight $1$, $p$-regular, and has real multiplication (RM), Cho--Vatsal showed that a $p$-adic overconvergent generalized Hecke eigenform $f'$ exists (\cite{CV}).
In this case, Darmon--Lauder--Rotger explicitly described the Fourier coefficients of $f'$ (\cite{DLR15}) by translating the question into Galois deformation theory, relating the Fourier coefficients to certain Galois cohomology classes, and using class field theory to explicitly describe the relevant Galois cohomology classes.
Our method is the same as \cite{DLR15} in the sense that we also employ Galois deformation theory.
However, unlike the case of weight $1$, the Galois representation associated to a Hecke eigenform of weight $\geq2$ does not have finite image, so we cannot use class field theory to get an even more explicit description of the Galois cohomology classes involved.

In the following we introduce the setup to state our main theorem more precisely.
Let $K$ be an imaginary quadratic field with discriminant $D_K$.
Fix a complex embedding $\sigma\colon K \inj \CC$.
Let $k\geq2$ be an integer and 
\[
 \psi \colon K^\times \backslash \AA_K^\times \rightarrow \CC^\times
\] 
be a Grossencharacter of $K$ of infinity type $\sigma^{k-1}$ with conductor $\frakM$.
Let $f_0$ be the CM modular form associated to $\psi$.
Then $f_0$ is a normalized Hecke eigenform of weight $k$, level $\Gamma_1(N)$ with $N \defeq |D_K| \Nm_{K/\QQ} \frakM$, and nebentypus 
\[
 \chi \colon (\ZZ/N)^\times \rightarrow \Qbar \quad a \rightarrow \chi_K(a) \psi((a)),
\]
where $\chi_K \colon (\ZZ/D_K)^\times \rightarrow \{\pm1\} \subset \CC^\times$ is the (odd) Dirichlet character associated to $K$.
The $q$-expansion of $f_0$ at the $\infty$-cusp is 
\[
 f_0(z)  = \sum_{\fraka} \psi(\fraka) q^{\Nm_{K/\QQ}\fraka},
\]
where $\fraka$ run over all integral ideals of $K$ relatively prime to $\frakM$.

Fix a rational prime $p$ which does not divide $N$ and splits in $K$.
Given an isomorphism $\iota_p \colon \CC \xrightarrow{\sim} \Qpbar$, $\iota_p \circ \sigma \colon K \inj \Qpbar$ determines a prime of $K$ above $p$, which we denote by $\frakp$.
Let $\frakp'$ be another $p$-adic prime of $K$.
Then $\{ \val_p(\psi(\frakp)), \val_p(\psi(\frakp')) \} = \{ 0, k-1 \}$.
We fix some $\iota_p$ such that $\val_p(\psi(\frakp)) = 0$.
Let 
\[
 f(z) = f_0(z) - \psi(\frakp) f_0(pz).
\]
This is a modular form of weight $k$ and level $\Gamma_1(N)\cap \Gamma_0(p)$, and its $U_p$-slope is $k-1$, also referred to as the \emph{critical} slope.
One calls $f$ the \emph{critical $p$-stabilization of $f_0$}. 
We denote its $q$-expansion by 
\[
 f(z) = \sum_{n\geq1} a_n q^n.
\]

Let $\cH^{Np}$ be the abstract Hecke algebra generated over $\QQ_p$ by $T_\ell$ with $\ell\nmid Np$ 
and $U_p$.
Let $I_f$ be the ideal of $\cH^{Np}$ given by the Hecke eigenform $f$, i.e. if we let
\[
 \varphi_f \colon \cH^{Np} \rightarrow \CC_p, \quad 
T_\ell \mapsto a_\ell,
U_p \mapsto \psi(\frakp'), 
\]
then $I_f = \ker \varphi_f$.

Let $M_k^\dagger(\Gamma_1(N),\CC_p)$ (resp.\ $S_k^\dagger(\Gamma_1(N),\CC_p)$) denote the space of (resp.\ cuspidal) overconvergent modular forms of weight $k$ and of tame level $\Gamma_1(N)$ with coefficients in $\CC_p$.
As mentioned before, $S_k^\dagger(\Gamma_1(N),\CC_p)[I^2]$ is $2$-dimensional by \cite[Thm.\ 7.2]{Col96}.
Let $f'\in S_k^\dagger(\Gamma_1(N),\CC_p)[I^2]$ which is not a scalar multiple of $f$.
We write the $q$-expansion of $f'$ as
\[
 f'(z) = \sum_{n\geq1} a_n' q^n.
\]
We further require that $f'$ is \emph{normalized}, in the sense that $a_1'=0$ and $a_{\ell_0}'=1$ where $\ell_0$ is the smallest rational prime inert in $K$; then $f'$ is unique.

We make the assumption that $\psi(\frakp)(\psi^c)^{-1}(\frakp)\neq p$ through out the article.

Our main theorem is a description of $a_\ell'$ in terms of Galois cohomology classes.
\begin{theorem}[Proposition~\ref{prop:ap}, Corollary~\ref{cor:al}] \label{thm:main3}
 Let $f$ and $f'$ be as above.
 Then 
\begin{enumerate}
 \item $a_p' = 0$ 
 \item $a_\ell' = 
\begin{cases} 
 0 & \text{ if $\ell\nmid Np$ and $\ell$ splits in $K$} \\
 \psi(c  \Frob_\ell) \cdot e(\Frob_\ell^2)  & \text{ if $\ell\nmid Np$ and $\ell$ is inert in $K$}.
\end{cases}$ \\
Here $c\in\Gal_\QQ$ is the complex conjugation, and $e\in H^1_{\frakp}(\Gal_K,\psi(\psi^c)^{-1})$ is the unique cohomology class unramified outside $\frakp$ mapping $\Frob_{\ell_0}^2\in \Gal_K$ to $\psi(c \Frob_{\ell_0})^{-1}$, where $\ell_0$ is the smallest rational prime which is inert in $K$.
\end{enumerate}
\end{theorem}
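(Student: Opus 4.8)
\emph{Strategy of proof.}
The plan is to follow the approach of \cite{DLR15}: reinterpret the extra overconvergent form $f'$ as a first–order Galois deformation of $\rho_f := \Ind_{\Gal_K}^{\Gal_\QQ}\psi$ (with $\psi$ regarded $p$-adically via $\iota_p$), and then extract each $a_\ell'$ from the cohomology class underlying that deformation. First I would repackage the hypothesis: since $S_k^\dagger(\Gamma_1(N),\CC_p)[I_f^2]$ is two–dimensional with basis $\{f,f'\}$ (and $f$, $f'$ normalized as in the statement), the Hecke action on this plane together with the $\cH^{Np}$-equivariance of the $q$-expansion map produces an algebra homomorphism $\widetilde\varphi\colon\cH^{Np}\to\CC_p[\epsilon]/(\epsilon^2)$ with $T_\ell\mapsto a_\ell+a_\ell'\epsilon$ for $\ell\nmid Np$ and $U_p\mapsto\psi(\frakp')+a_p'\epsilon$; the $\epsilon$-coefficients come out as exactly the $a_\ell'$, $a_p'$ we want to compute precisely because $a_1=1$ and $a_1'=0$. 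Feeding $\widetilde\varphi$ into the Galois representations attached to overconvergent (critical-slope, and in general non-classical) generalized eigenforms yields a deformation $\widetilde\rho\colon\Gal_\QQ\to\GL_2(\CC_p[\epsilon]/(\epsilon^2))$ of $\rho_f$ with $\Tr\widetilde\rho(\Frob_\ell)=a_\ell+a_\ell'\epsilon$ for $\ell\nmid Np$, having constant determinant (the diamond operators act by scalars on the plane), being minimally ramified at primes dividing $N$, and --- the crucial local input --- satisfying the local condition at $p$ that encodes the critical $p$-stabilization.

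Writing $\widetilde\rho=(1+\epsilon\,c)\rho_f$ with $c$ a cocycle for the adjoint action gives a class $[c]\in H^1(\Gal_\QQ,\ad\rho_f)$, and the constant determinant forces $[c]\in H^1(\Gal_\QQ,\ad^0\rho_f)$. Because $\rho_f$ is irreducible and dihedral we have $\ad^0\rho_f\cong\chi_K\oplus\Ind_{\Gal_K}^{\Gal_\QQ}\eta$ with $\eta:=\psi(\psi^c)^{-1}$, so Shapiro's lemma turns the relevant part of $[c]$ into a class $e'\in H^1(\Gal_K,\eta)$, realized concretely as the upper-right entry of $c|_{\Gal_K}$ in the basis adapted to the induction. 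I would then (i) show that the $\chi_K$-component of $[c]$ vanishes --- its Selmer group is $0$, which ultimately rests on the local condition at $p$ excluding the anticyclotomic direction; (ii) check that the remaining local conditions place $e'$ in the group $H^1_{\frakp}(\Gal_K,\eta)$ of the statement; and (iii) note that this group is one-dimensional. For (iii) I would feed in Coleman's count that the generalized eigenspace has dimension $2$: under the deformation dictionary that dimension is $1+\dim H^1_{\frakp}(\Gal_K,\eta)$, and the running hypothesis $\psi(\frakp)(\psi^c)^{-1}(\frakp)\ne p$ is exactly what makes the local term at $\frakp$ in the Euler characteristic (equivalently, the fact that $\eta|_{\Gal_{K_\frakp}}$ is neither trivial nor cyclotomic) come out as expected, so the identification is clean.

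Granting all this, parts (1) and (2) are short computations with $2\times2$ matrices. For any $\ell\nmid Np$ one has $a_\ell'=\Tr\bigl(\rho_f(\Frob_\ell)\,c(\Frob_\ell)\bigr)$. If $\ell$ splits in $K$ then, after replacing $\Frob_\ell$ by a conjugate, $\rho_f(\Frob_\ell)$ is diagonal while $c(\Frob_\ell)$ is off-diagonal (its diagonal part is the $\chi_K$-component, which vanishes), so the trace is $0$. If $\ell$ is inert then $\rho_f(\Frob_\ell)$ is anti-diagonal with lower-left entry $\psi(c\,\Frob_\ell)$; plugging into the cocycle identity $c(\Frob_\ell^2)=c(\Frob_\ell)+\rho_f(\Frob_\ell)\,c(\Frob_\ell)\,\rho_f(\Frob_\ell)^{-1}$ shows that $\Tr\bigl(\rho_f(\Frob_\ell)c(\Frob_\ell)\bigr)$ equals $\psi(c\,\Frob_\ell)$ times the upper-right entry of $c(\Frob_\ell^2)$, i.e. $\psi(c\,\Frob_\ell)\,e'(\Frob_\ell^2)$; here $e'(\Frob_\ell^2)$ depends only on the class because $\eta(\Frob_\ell^2)=\psi((\ell))/\psi^c((\ell))=1$ for $\ell$ inert. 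For part (1), $a_p'=0$: the $U_p$-eigenvalue of the deformed system is the Frobenius eigenvalue on the unramified line at $p$ singled out by the critical $p$-stabilization, and that local condition keeps it equal to $\psi(\frakp')$ with no $\epsilon$-term. Finally, specializing the inert formula to $\ell=\ell_0$ and using $a_{\ell_0}'=1$ gives $e'(\Frob_{\ell_0}^2)=\psi(c\,\Frob_{\ell_0})^{-1}$; since $H^1_{\frakp}(\Gal_K,\eta)$ is one-dimensional this normalization forces $e'=e$, which finishes the proof.

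The step I expect to be the real obstacle is the Galois input of the first two paragraphs: constructing the deformation $\widetilde\rho$ attached to a critical-slope overconvergent \emph{generalized} eigenform (which, unlike the weight-one situation of \cite{DLR15}, has infinite image and no classical \'etale-cohomological avatar), and above all isolating the precise local condition at $p$ --- one must verify that it forces $a_p'=0$ and that it cuts out exactly $H^1_{\frakp}(\Gal_K,\eta)$, and that the running hypothesis $\psi(\frakp)(\psi^c)^{-1}(\frakp)\ne p$ is the transversality assumption keeping that Selmer group one-dimensional. Everything else --- the decomposition of $\ad^0\rho_f$, Shapiro's lemma, the matrix identity, and the final normalization --- is bookkeeping.
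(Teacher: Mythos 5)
Your overall route is the same as the paper's: promote $f+f'\varepsilon$ to a Galois deformation $\widetilde\rho=(1+\varepsilon c)\rho_f$, move the relevant component of $[c]$ into $H^1(\Gal_K,\psi(\psi^c)^{-1})$ (you via $\ad^0\rho_f\cong\chi_K\oplus\Ind_{\Gal_K}^{\Gal_\QQ}\psi(\psi^c)^{-1}$ and Shapiro, the paper via inflation--restriction applied to the full $V\tensor V^\ast$ --- these are equivalent repackagings), kill the diagonal components to get the split-prime vanishing and $a_p'=0$, and read off the inert-prime coefficients from the cocycle identity $c(g^2)=c(g)+\rho_f(g)c(g)\rho_f(g)^{-1}$. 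Steps (i), (ii), the matrix computations, the well-definedness of $e(\Frob_\ell^2)$, and the final normalization all match the paper.

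The genuine gap is your step (iii). You propose to deduce $\dim H^1_{\frakp}(\Gal_K,\psi(\psi^c)^{-1})=1$ from Coleman's count $\dim S_k^\dagger(\Gamma_1(N),\CC_p)[I_f^2]=2$ via a dictionary ``generalized eigenspace dimension $=1+\dim H^1_{\frakp}$.'' That dictionary is not available at this stage: duality between the Hecke algebra and the space of forms only gives an \emph{injection} of the Hecke tangent space into the deformation space, i.e.\ a lower bound $\dim D(E[\varepsilon]/\varepsilon^2)\geq 1$; turning Coleman's count into an upper bound on the Selmer group would require an infinitesimal $R=\TT$ statement for the critical-slope deformation problem, which is precisely what one does not have for free (the paper only invokes an $R=\TT$ theorem in Section~4, for the \emph{ordinary} companion problem, and under an extra irreducibility hypothesis on $\bar\rho$). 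The paper instead gets the upper bound arithmetically: the Iwasawa Main Conjecture for $K$ (Rubin, de Shalit), together with $L(\psi^{-1}\psi^c,0)\neq0$ and the running hypothesis $\psi(\frakp)(\psi^c)^{-1}(\frakp)\neq p$, gives $H^1_f(\Gal_K,\psi(\psi^c)^{-1})=0$, whence $\loc_\frakp\colon H^1_{\frakp}(\Gal_K,\psi(\psi^c)^{-1})\to H^1(\Gal_{K_\frakp},\psi(\psi^c)^{-1})$ is injective with $1$-dimensional target by the local Euler characteristic formula (this, not the running hypothesis, is where ``$\psi(\psi^c)^{-1}\neq E, E(1)$ locally'' enters). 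The companion vanishing $H^1_f(\Gal_K,E)=0$ (finiteness of the class group) is likewise the named input you need to kill $e_{11}$ and $e_{22}$ --- and hence to get $a_p'=0$ and the split-prime case --- after you have checked, via Sen's theorem and the crystalline-period condition at $p$, that these classes are crystalline everywhere. With these two global vanishing statements supplied, your argument closes.
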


The idea of the proof is as follows.
Let $\rho\colon \Gal_\QQ \rightarrow \GL(V) \isom \GL_2(\CC_p)$ be the $p$-adic Galois representation associated to $f$.
Hence $a_\ell = \Tr(\rho(\Frob_\ell))$ for all $\ell\nmid Np$.
Observe that one may regard $\widetilde{f} \defeq f+f' \varepsilon$ as a Hecke eigenform in $S_k^\dagger(\Gamma_1(N),\CC_p[\varepsilon]/\varepsilon^2)$.
Hence we also have a $p$-adic Galois representation
\[
 \widetilde\rho\colon \Gal_\QQ \rightarrow \GL(\widetilde V) \isom \GL_2(\CC_p[\varepsilon]/\varepsilon^2)
\]
such that 
\[
 a_\ell + a_\ell'\varepsilon = \Tr(\widetilde\rho(\Frob_\ell)), \forall \ell\nmid Np.
\]
Note that $\widetilde{\rho}$ is a first order deformation of $\rho$ with the same Hodge--Tate--Sen weights as $\rho$. 
Moreover, coming from a $p$-adic overconvergent Hecke eigenform, $\widetilde{\rho}$ extends certain crystalline period of $\widetilde{\rho}$.
Hence one can use Galois deformation theory and realize $\widetilde{\rho}$ in the Galois cohomology $H^1(\Gal_\QQ, V\tensor V^\ast)$ with specific local conditions.
We further use the inflation-restriction sequence to reduce the Galois cohomology of $\Gal_\QQ$ to that of $\Gal_K$. 
This makes the Galois cohomology calculation easier because $\left.V\right|_{\Gal_K}$ splits into two characters.


We have mentioned that Darmon--Lauder--Rotger investigated the Fourier coefficients of the generalized Hecke eigenform in the case of weight $1$ $p$-regular RM forms (\cite{DLR15}).
Betina generalized this to Hilbert modular forms of parallel weight $1$ with RM and $p$-regular (\cite{Bet16}).
More generally, when one has an explicit description of a Hecke eigenform $f$, and one knows that there is a generalized Hecke eigenform $f'$ associated to it (in other words, the eigenvariety is non-\'etale at the point corresponding to $f$), it might be possible to use the same method to obtain an explicit description of the Fourier coefficients of $f'$. 
For example, one knows that a weight $1$ $p$-irregular CM form gives a non-\'etale point on the eigencurve (\cite{BD}).

Our second result is an investigation of the dimension of the generalized Hecke eigenspace containing $f$, namely $S^\dagger_k(\Gamma_1(N),\CC_p)[I^\infty]$.

\begin{proposition}[Prop.\ \ref{prop:ramdeg}] \label{prop:main}
Let $\bar{\rho}$ be the reduction of $\rho_f$ modulo $p$.
Assume that $\bar\rho$ is absolutely irreducible after restricted to $\Gal_{\QQ(\mu_p)}$.
Then the following are equivalent.
\begin{enumerate}
 \item $\dim S^\dagger_k(\Gamma_1(N),\CC_p)[I^\infty]=2$.
 \item The restriction to decomposition group at $\frakp'$
\[
H^1_{\frakp}(K,\psi(\psi^c)^{-1}) \rightarrow H^1(K_{\frakp'},\psi(\psi^c)^{-1})
\] 
is an isomorphism (equivalently, non-zero).
\end{enumerate}
\end{proposition}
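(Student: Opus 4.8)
The plan is to convert condition (1) into an infinitesimal lifting question for $\rho_f$ and then settle it by the reduction to $\Gal_K$ already used for Theorem~\ref{thm:main3}.

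First, on the Hecke side, let $\mathcal T$ denote the image of $\cH^{Np}$ in $\End_{\CC_p}\big(S^\dagger_k(\Gamma_1(N),\CC_p)[I_f^\infty]\big)$, a local Artinian $\CC_p$-algebra with residue field $\CC_p$. Because $\bar\rho|_{\Gal_{\QQ(\mu_p)}}$ is absolutely irreducible, one shows as in \cite{DLR15}, \cite{Bet16} that $S^\dagger_k(\Gamma_1(N),\CC_p)[I_f^\infty]$ is free of rank one over $\mathcal T$. Since $f$ spans the full eigenspace $S^\dagger_k(\Gamma_1(N),\CC_p)[I_f]$ (multiplicity one for the eigensystem of $f$), the socle of $\mathcal T$ is one-dimensional, so $\mathcal T$ is Gorenstein; combined with $\dim_{\CC_p} S^\dagger_k(\Gamma_1(N),\CC_p)[I_f^2]=2$ (\cite[Thm.~7.2]{Col96}) this forces $\mathcal T\isom\CC_p[t]/(t^n)$ with $n=\dim_{\CC_p} S^\dagger_k(\Gamma_1(N),\CC_p)[I_f^\infty]\geq 2$. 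Thus (1) is equivalent to $\mathfrak m_{\mathcal T}^2=0$.

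On the Galois side, the representation carried by $\mathcal T$, together with the irreducibility hypothesis, identifies $\mathcal T$ — by an $R=\mathbb T$ argument compatible with the deformation-theoretic proof of Theorem~\ref{thm:main3} — with the deformation ring $R$ of $\rho_f$ for the problem: unramified outside $Np$, trianguline at $p$ with the critical-slope refinement, and with fixed Hodge–Tate–Sen weights. Consequently $\mathfrak m_{\mathcal T}^2=0$, i.e.\ $n=2$, if and only if the first-order deformation $\widetilde\rho$ over $\CC_p[\varepsilon]$ does not lift to a deformation over $\CC_p[t]/(t^3)$ inside this problem — equivalently, an obstruction class $\mathrm{ob}(\widetilde\rho)$ is non-zero. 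Restricting to $\Gal_K$, where $\rho_f|_{\Gal_K}=\psi\directsum\psi^c$ and $\ad\rho_f|_{\Gal_K}\isom\mathbf 1^{\directsum 2}\directsum\eta\directsum\eta^{-1}$ with $\eta:=\psi(\psi^c)^{-1}$, the $\mathbf 1$-isotypic directions are annihilated by the deformation conditions (here one uses that Leopoldt's conjecture holds for the abelian field $K$), so — exactly as in the proof of Theorem~\ref{thm:main3} — the class of $\widetilde\rho|_{\Gal_K}$ is a non-zero multiple of $e\in H^1_{\frakp}(\Gal_K,\eta)$; and since $[K:\QQ]=2$ is invertible, $\mathrm{ob}(\widetilde\rho)$ may be computed over $\Gal_K$.

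The remaining — and main — point is a local computation at $\frakp'$, the prime carrying the slope-$(k-1)$ part of the refinement, showing that $\mathrm{ob}(\widetilde\rho)=0$ if and only if $e$ restricts to $0$ in $H^1(K_{\frakp'},\eta)$: imposing constancy of the Hodge–Tate–Sen weight to second order forces the $\frakp'$-component of any lift to interact with $e$ through the Sen operator, and the resulting obstruction is measured precisely by $e|_{\frakp'}$ (equivalently, $n$ is the ramification index of the weight map on the eigencurve at $f$, so $n=2$ amounts to the non-vanishing of the second derivative of the weight function there). Granting this, (1) holds iff $e|_{\frakp'}\neq 0$; since $\dim_{\CC_p} H^1_{\frakp}(\Gal_K,\eta)=1$ (Theorem~\ref{thm:main3}) and $\dim_{\CC_p} H^1(K_{\frakp'},\eta)=1$ — the latter by the local Euler characteristic formula and the standing hypothesis $\psi(\frakp)(\psi^c)^{-1}(\frakp)\neq p$, which makes the relevant $H^0$ and $H^2$ vanish — this is exactly the assertion that the restriction map is non-zero, i.e.\ an isomorphism, which is (2). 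I expect this local $p$-adic Hodge theory step — pinning down $\mathrm{ob}(\widetilde\rho)$ as $e|_{\frakp'}$ rather than some other bilinear expression in $e$, which is where the ``evil'' pairing of $\varphi$-slope and Hodge–Tate weight at $\frakp'$ genuinely enters — to be the main obstacle, with the freeness over $\mathcal T$ and the $R=\mathbb T$ identification, the two places the hypothesis on $\bar\rho|_{\Gal_{\QQ(\mu_p)}}$ is used, as the secondary technical inputs.
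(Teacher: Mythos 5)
Your reduction of condition (1) to the statement ``the first-order deformation $\widetilde\rho$ does not extend to $\CC_p[t]/(t^3)$ within the fixed-weight, critical-refinement deformation problem'' is a reasonable reformulation, but the step you yourself flag as the main obstacle --- identifying the obstruction class $\mathrm{ob}(\widetilde\rho)$ with the localization $e|_{\frakp'}$ rather than with some other expression in $e$ --- is precisely the content of the proposition, and you give no argument for it. As written, the ``local computation at $\frakp'$'' is an assertion about how the Sen operator and the crystalline-period condition interact to second order, with no mechanism supplied; nothing in your setup rules out, say, an obstruction proportional to a cup product $e\cup e$ or to a quantity supported at $\frakp$. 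So the proposal has a genuine gap at its central step. (The preliminary Hecke-algebra part is essentially fine: since $\dim S_k^\dagger(\Gamma_1(N),\CC_p)[I^2]=2$ forces $\mathfrak m_{\cT}/\mathfrak m_{\cT}^2$ to be one-dimensional, $\cT\isom\CC_p[t]/(t^n)$ follows without the Gorenstein detour; but this only sets up the question.)

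The paper circumvents the second-order/obstruction calculus entirely, and this is the idea you are missing: writing $f=\theta^{k-1}(g)$ with $g\in M^\dagger_{2-k}(\Gamma_1(N))$ of slope $0$, Coleman's classicality theorem gives $e_f=e_g+1$ (Lemma~\ref{lem:ramdeg}), so $e_f=2$ becomes the \emph{first-order} statement $e_g=1$, i.e.\ the vanishing of the tangent space $D^{\ord,0}(E[\varepsilon]/\varepsilon^2)$ of the weight-fixed ordinary deformation problem for $\rho_g\isom\rho_f\tensor\chi_\cyc^{-k+1}$. In that problem the condition at $\frakp'$ is not an obstruction but part of the ordinarity condition itself: Lemma~\ref{lem:Dord} shows $e_{12}|_{\Gal_{K_{\frakp'}}}=0$, so $D^{\ord,0}(E[\varepsilon]/\varepsilon^2)$ injects into $H^1_f(K,E)\directsum\ker\bigl[H^1_{\frakp}(K,\psi(\psi^c)^{-1})\rightarrow H^1(K_{\frakp'},\psi(\psi^c)^{-1})\bigr]$, and the equivalence with (2) follows from the vanishing of $H^1_f$ via the Main Conjecture. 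If you want to salvage your direct approach, you would need to actually prove the obstruction formula $\mathrm{ob}(\widetilde\rho)=e|_{\frakp'}$ (up to a nonzero scalar), which is a nontrivial trianguline/Sen-theoretic computation; the $\theta^{k-1}$ trick is exactly what lets the paper avoid it.
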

The idea of the proof is to first relate the dimension of the generalized Hecke eigenspace containing $f$, to the dimension of the generalized Hecke eigenspace containing $g$, where $g\in M_{2-k}^\dagger(\Gamma_1(N),\CC_p)$ such that $f=\theta^{k-1}(g)$.
Then one may relate the latter to Galois cohomology, by considering ordinary Galois deformations of $\rho_g$, the Galois representation associated to $g$.

\subsection*{Structure of the paper}
In Section~\ref{sec:fourier}, we prove Theorem~\ref{thm:main3}.
Section~\ref{sec:num} contains numerical examples for the Fourier coefficients of the generalized Hecke eigenform. 
In Section~\ref{sec:ramdeg}, we prove Proposition~\ref{prop:main}.

\section{Fourier coefficients of the generalized Hecke eigenform} \label{sec:fourier}

Let $f=\sum_{n\geq0}a_nq^n$ be as in the introduction, the critical $p$-stabilization of a CM modular form associated to a Grossencharacter $\psi$.

Let $\rho \colon \Gal_\QQ \rightarrow \GL(V)$ be the $p$-adic Galois representation associated to $f$.
Here $V$ is a $2$-dimensional $E$-vector space, where $E$ is the finite extension of $\QQ_p$ generated by the Fourier coefficients $a_n$ of $f$.
We have $\rho \isom \Ind_{\Gal_K}^{\Gal_\QQ} \psi$, where we have abused notation to let $\psi$ also denote the $p$-adic character of $\Gal_K$ associated to the Grossencharacter $\psi$.
In particular, there exists a basis $\{v_1,v_2\}$ of $V$ such that
\[
 \left.\rho\right|_{\Gal_K} \sim 
\begin{pmatrix}\psi&\\&\psi^c\end{pmatrix},
\]
 where $c\in \Gal_\QQ$ is the complex conjugation and $\psi^c(g)\defeq\psi(c g c^{-1})$.
If we choose $v_2$ such that $v_2 = \rho(c) v_1$, then we also have
\[
  \left.\rho\right|_{\Gal_\QQ \setminus \Gal_K} \sim 
\begin{pmatrix}&\eta'\\ \eta&\end{pmatrix},
\]
where $\eta$ and $\eta'$ are functions on $\Gal_\QQ\setminus \Gal_K$ given by $\eta(\cdot) = \psi(c\cdot)$ and $\eta'(\cdot) = \psi(\cdot c^{-1})$.

We have $D^+_\cris(\left.V\right|_{\Gal_{\QQ_p}})^{\varphi=a_p}\neq0$, where $D^+_\cris(\cdot) \defeq (\cdot \tensor_{\QQ_p} B_\cris^+)^{\Gal_{\QQ_p}}$.
Moreover, we have $D^+_\cris(\left.V\right|_{\Gal_{\QQ_p}})^{\varphi=a_p} = D^+_\cris(\left.\psi^c\right|_{\Gal_{K_\frakp}})^{\varphi=a_p}$.
This is because $\left.V\right|_{\Gal_{\QQ_p}} = \left.V\right|_{\Gal_{K_\frakp}} = \psi \directsum \psi^c$, $a_p = \psi(\frakp') = \psi^c(\frakp)$ has $p$-adic valuation $k-1$, and $\left.\psi^c\right|_{\Gal_{K_\frakp}}$ has Hodge--Tate weight $k-1$, 

\subsection{Galois cohomology calculation} \label{subsec:Galoiscohom}
Let $D$ be the deformation functor on the category of Artinian local $E$-algebras with residue field $E$, so that for such $E$-algebra $A$, 
$D(A)$ is the set of strict equivalence classes of continuous representations $\rho_A \colon \Gal_\QQ \rightarrow \GL(V_A)$ deforming $\rho$ such that
\begin{enumerate}
 \item
for all primes $\ell$ not dividing $p$, $\rho_A$ and $\rho$ are the same after restricting to the inertia subgroup at $\ell$,
 \item $\left.V_A\right|_{\Gal_{\QQ_p}}$ has a constant Hodge--Tate--Sen weight $0$, and  
 \item 
there exists a lift $\widetilde{a}_p\in A$ of $a_p$ such that 
$D^+_\cris(\left.V_A\right|_{\Gal_{\QQ_p}})^{\varphi=\widetilde{a}_p}\neq 0$.
\end{enumerate}
Let $D^0$ be the sub-functor of $D$ of deformations with the additional condition
\begin{enumerate}[resume]
 \item 
$\rho_A$ has constant Hodge--Tate--Sen weights.
\end{enumerate}

By \cite[Prop.\ 2.4]{Berg14} and the paragraphs before the proposition, the functors $D$ and $D^0$ are pro-representable, and in fact $\dim D^0(E[\varepsilon]/\varepsilon^2) = \dim D(E[\varepsilon]/\varepsilon^2)$.
Moreover, the tangent space of the eigencurve at the point $f$ injects into $D(E[\varepsilon]/\varepsilon^2)$, so we conclude that
\[
 \dim D^0(E[\varepsilon]/\varepsilon^2) = \dim D(E[\varepsilon]/\varepsilon^2)\geq 1.
\]
We will show in Corollary~\ref{cor:D0dim1} that in fact 
\[
 \dim_E D^0(E[\varepsilon]/\varepsilon^2) = \dim_E D(E[\varepsilon]/\varepsilon^2) = 1.
\]

Let $\widetilde{\rho}\colon \Gal_\QQ \rightarrow \GL(\widetilde{V})$ be in $D^0(E[\varepsilon]/\varepsilon^2)$.
Recall that we have
\[
 D(E[\varepsilon]/\varepsilon^2) \inj H^1(\Gal_\QQ, V\tensor V^\ast), \quad
\widetilde\rho=(\id+\rho'\varepsilon)\rho \mapsto \rho'. 
\]
We first reduce the Galois cohomology calculation from over $\QQ$ to over $K$.
\begin{lemma} \label{lem:restoK}
\[
 H^1(\QQ,V\tensor V^\ast) \isom H^1(K, V\tensor V^\ast)^{\Gal(K/\QQ)} \isom H^1(K,E) \directsum H^1(K, \psi(\psi^c)^{-1}).
\]
\end{lemma}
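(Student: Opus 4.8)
The plan is to prove the two isomorphisms separately. For the first one, $H^1(\QQ, V \tensor V^\ast) \isom H^1(K, V \tensor V^\ast)^{\Gal(K/\QQ)}$, I would invoke the inflation--restriction exact sequence for the normal subgroup $\Gal_K \subset \Gal_\QQ$ with quotient $\Gal(K/\QQ) \isom \ZZ/2$. This gives
\[
0 \to H^1(\Gal(K/\QQ), (V\tensor V^\ast)^{\Gal_K}) \to H^1(\QQ, V\tensor V^\ast) \xrightarrow{\Res} H^1(K, V\tensor V^\ast)^{\Gal(K/\QQ)} \to H^2(\Gal(K/\QQ), (V\tensor V^\ast)^{\Gal_K}).
\]
Since $\Gal(K/\QQ)$ has order $2$ and the coefficient module $(V\tensor V^\ast)^{\Gal_K}$ is an $E$-vector space with $E$ of characteristic $0$, both the outer terms vanish (cohomology of a finite group with coefficients in a uniquely divisible abelian group, or equivalently a $\QQ$-vector space, is trivial in positive degrees). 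Hence restriction is an isomorphism.

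For the second isomorphism, I would use the decomposition of $\left.V\right|_{\Gal_K}$ into the two characters $\psi$ and $\psi^c$ coming from the basis $\{v_1, v_2\}$ introduced above. Writing $V|_{\Gal_K} \isom \psi \directsum \psi^c$, the adjoint decomposes as
\[
\left. V \tensor V^\ast \right|_{\Gal_K} \isom \Hom(\psi,\psi) \directsum \Hom(\psi^c,\psi^c) \directsum \Hom(\psi,\psi^c) \directsum \Hom(\psi^c,\psi) \isom E \directsum E \directsum \psi^c \psi^{-1} \directsum \psi (\psi^c)^{-1}.
\]
Passing to $H^1(K,-)$ and then taking $\Gal(K/\QQ)$-invariants, the complex conjugation $c$ swaps the two copies of $E$ (the diagonal entries) and swaps $\psi^c\psi^{-1}$ with $\psi(\psi^c)^{-1}$ (the off-diagonal entries), since conjugation by $c$ interchanges $v_1$ and $v_2$ and hence interchanges $\psi$ with $\psi^c$. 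Therefore on the pair $H^1(K,E) \directsum H^1(K,E)$ the involution is the swap, whose invariants are the diagonal copy $H^1(K,E)$; similarly on $H^1(K,\psi^c\psi^{-1}) \directsum H^1(K,\psi(\psi^c)^{-1})$ the invariants form one copy, which I would identify with $H^1(K,\psi(\psi^c)^{-1})$. Combining, $H^1(K, V\tensor V^\ast)^{\Gal(K/\QQ)} \isom H^1(K,E) \directsum H^1(K,\psi(\psi^c)^{-1})$.

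I do not expect a serious obstacle here; this is essentially a bookkeeping argument with inflation--restriction plus the explicit shape of $\rho|_{\Gal_K}$. The one point requiring a little care is checking that the $\Gal(K/\QQ)$-action on $H^1(K,-)$ really does permute the summands as claimed rather than acting by some twist — this comes down to the relation $v_2 = \rho(c) v_1$ and the compatibility of the conjugation action on cohomology with the conjugation action $\psi \mapsto \psi^c$ on the coefficients, which I would spell out using that $c^2 \in \Gal_K$ acts trivially up to the relevant identifications. The identification of which of the two isomorphic off-diagonal summands to call $\psi(\psi^c)^{-1}$ versus $\psi^c\psi^{-1}$ is a harmless choice, consistent with the convention fixed by $v_2 = \rho(c)v_1$.
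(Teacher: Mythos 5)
Your proposal is correct and follows essentially the same route as the paper: inflation--restriction with the outer terms killed because they are torsion groups that are also $E$-vector spaces, followed by the decomposition $V\tensor V^\ast|_{\Gal_K}\isom E\directsum E\directsum \psi^c\psi^{-1}\directsum\psi(\psi^c)^{-1}$ and the observation that the $\Gal(K/\QQ)$-action interchanges the summands in pairs (in the paper, with an explicit harmless twist by $\eta(c)\eta'(c)^{-1}$ on the off-diagonal pair), so that projection onto one summand of each pair is an isomorphism on invariants. The one point you flag as needing care --- whether the action is a genuine swap or a twisted one --- is resolved exactly as you anticipate: there is a scalar twist, but it does not affect the conclusion.
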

\begin{proof}
 We have the inflation-restriction sequence, 
\small
\[
  H^1(\Gal(K/\QQ), V\tensor V^\ast) \rightarrow H^1(\QQ,V\tensor V^\ast) \rightarrow H^1(K, V\tensor V^\ast)^{\Gal(K/\QQ)} \rightarrow H^2(\Gal(K/\QQ), V\tensor V^\ast).
\]
\normalsize
 Since $\Gal(K/\QQ)$ has order $2$, $H^1(\Gal(K/\QQ),V\tensor V^\ast)$ and $H^2(\Gal(K/\QQ),V\tensor V^\ast)$ are $2$-torsion.
 But they are also $E$-vector spaces, which are torsion-free. 
 Hence the first and last term vanish, and we get the first isomorphism in the lemma.
 
 For the second isomorphism in the lemma, note that the $\Gal(K/\QQ)$-action on $H^1(K,V\tensor V^\ast)$ is given by 
\[
 (c \rho')(g) = \rho(c) \cdot \rho'(c g c^{-1}) \cdot \rho(c)^{-1}. 
\]
 Because $\left.\rho\right|_{\Gal_K} \isom \psi \directsum \psi^c$, we have the isomorphism
\small
\[
 H^1(K,V\tensor V^\ast) \isom H^1(K,E) \directsum H^1(K,\psi(\psi^c)^{-1}) \directsum H^1(K,\psi^c\psi^{-1}) \directsum H^1(K,E),
\]
\normalsize
under which $\left.\rho'\right|_{\Gal_K}$ is mapped to $(e_{11},e_{12},e_{21},e_{22})$.
Then the invariance of $\left.\rho'\right|_{\Gal_K}$ under $\Gal(K/\QQ)$-action is translated into
\begin{align}
\label{eq:inv11}
 e_{22}(g)&=e_{11}(c g c^{-1}) \\
\label{eq:inv12}
 e_{21}(g)&=\eta(c)\eta'(c)^{-1} \cdot e_{12}(c g c^{-1}),
\end{align}
for all $g\in \Gal_K$.
Hence the projection onto the first two components 
\[
 H^1(\Gal_K, V\tensor V^\ast) \rightarrow H^1(\Gal_K,E)\directsum H^1(\Gal_K,\psi(\psi^c)^{-1})
\]
induces an isomorphism 
\[
H^1(\Gal_K,V\tensor V^\ast)^{\Gal(K/\QQ)}\xrightarrow{\sim} H^1(\Gal_K,E) \directsum H^1(\Gal_K,\psi(\psi^c)^{-1}). 
\]

\end{proof}

\begin{proposition} \label{prop:Ddim1}
 There is an isomorphism 
\[
 D(E[\varepsilon]/\varepsilon^2) \isom H^1_{\frakp}(\Gal_K,\psi(\psi^c)^{-1}) \isom H^1(\Gal_{K_{\frakp}},\psi(\psi^c)^{-1}).
\]
Here $H^1_\frakp(\Gal_K,\cdot)$ denotes the subspace of $H^1(\Gal_K,\cdot)$ of cohomology classes unramified/crystalline outside $\frakp$.

Hence $\dim_E D(E[\varepsilon]/\varepsilon^2)=\dim_E H^1(\Gal_{K_{\frakp}},\psi(\psi^c)^{-1})=1$.
\end{proposition}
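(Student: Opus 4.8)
The plan is to push the deformation-functor description through Lemma~\ref{lem:restoK}, isolate the local conditions at $p$, and match the result with a local Galois cohomology group. Under $D(E[\varepsilon]/\varepsilon^2) \inj H^1(\Gal_\QQ, V\tensor V^\ast)$ followed by Lemma~\ref{lem:restoK}, a first-order deformation $\widetilde\rho = (\id + \rho'\varepsilon)\rho$ is recorded by a pair $(\phi, e)$, where $\phi \in H^1(\Gal_K, E)$ is the diagonal entry $e_{11}$ and $e \in H^1(\Gal_K, \psi(\psi^c)^{-1})$ is the off-diagonal entry $e_{12}$, the entries $e_{22}, e_{21}$ being determined from these by the $\Gal(K/\QQ)$-equivariance used in that proof. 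I would first observe that conditions (1)--(3) force $\widetilde\rho|_{\Gal_{\QQ_p}}$ to be crystalline with the same Hodge--Tate weights as $\rho|_{\Gal_{\QQ_p}}$: condition (2) keeps the weight-$0$ (i.e.\ $\psi$) direction constant, while requiring in condition (3) that the crystalline line $D^+_\cris(V|_{\Gal_{\QQ_p}})^{\varphi=a_p} = D^+_\cris(\psi^c|_{\Gal_{K_\frakp}})$, which sits in Hodge--Tate weight $k-1 = \val_p(a_p)$, extend to a $\varphi$-eigenline of $\widetilde\rho|_{\Gal_{\QQ_p}}$ of the same slope pins down the weight-$(k-1)$ (i.e.\ $\psi^c$) direction as well. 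On these two directions the deformation then stays unramified, which forces $\phi$ unramified at $\frakp$ and, via $\phi^c|_{\Gal_{K_\frakp}} \leftrightarrow \phi|_{\Gal_{K_{\frakp'}}}$, at $\frakp'$; with condition (1) this makes $\phi$ unramified at every finite place. An everywhere-unramified homomorphism $\Gal_K \to E$ factors through the finite ideal class group of $K$, hence vanishes, so $D(E[\varepsilon]/\varepsilon^2) \inj H^1(\Gal_K, \psi(\psi^c)^{-1})$ via $\rho' \mapsto e$.

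Next I would match the image with $H^1_\frakp$. Condition (1) now says exactly that $e$ is unramified outside $p$. At $p$, the crystallinity of $\widetilde\rho|_{\Gal_{\QQ_p}}$ established above, read off the matrix (and using the $\Gal(K/\QQ)$-equivariance to see $e_{21}|_{\Gal_{K_\frakp}}$ as the $c$-transport of $e|_{\Gal_{K_{\frakp'}}}$), says that $e$ must be crystalline at both $\frakp$ and $\frakp'$; but $\psi(\psi^c)^{-1}|_{\Gal_{K_\frakp}}$ has Hodge--Tate weight $1-k < 0$, so the $0$-step of its de Rham filtration vanishes and $H^1_f(\Gal_{K_\frakp}, \psi(\psi^c)^{-1}) = H^1(\Gal_{K_\frakp}, \psi(\psi^c)^{-1})$, making the condition at $\frakp$ vacuous. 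Thus the surviving conditions are precisely ``unramified outside $p$, crystalline at $\frakp'$'', i.e.\ $e \in H^1_\frakp(\Gal_K, \psi(\psi^c)^{-1})$; conversely any such $e$, paired with $\phi = 0$, gives a deformation satisfying (1)--(3). This yields $D(E[\varepsilon]/\varepsilon^2) \isom H^1_\frakp(\Gal_K, \psi(\psi^c)^{-1})$.

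Then I would compute the local group. Since $\psi(\psi^c)^{-1}|_{\Gal_{K_\frakp}}$ has Hodge--Tate weight $1-k \ne 0$ it is nontrivial, so $H^0(\Gal_{K_\frakp}, \psi(\psi^c)^{-1}) = 0$; and $H^2(\Gal_{K_\frakp}, \psi(\psi^c)^{-1})$ is dual to $H^0(\Gal_{K_\frakp}, \psi^c\psi^{-1}\chi_{\cyc})$, which vanishes because $\psi^c\psi^{-1}\chi_{\cyc}|_{\Gal_{K_\frakp}}$ is nontrivial --- automatically so (its Hodge--Tate weight is $k-2 \ne 0$) when $k \ge 3$, and by the standing hypothesis $\psi(\frakp)(\psi^c)^{-1}(\frakp)\ne p$ when $k = 2$. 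The local Euler characteristic formula then gives $\dim_E H^1(\Gal_{K_\frakp}, \psi(\psi^c)^{-1}) = 1$.

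It remains to show that restriction to $\Gal_{K_\frakp}$ gives an isomorphism $H^1_\frakp(\Gal_K, \psi(\psi^c)^{-1}) \isom H^1(\Gal_{K_\frakp}, \psi(\psi^c)^{-1})$; combined with $\dim_E D(E[\varepsilon]/\varepsilon^2) \ge 1$ (the tangent space of the eigencurve injects into $D(E[\varepsilon]/\varepsilon^2)$, as recalled) this also yields $\dim_E D(E[\varepsilon]/\varepsilon^2) = 1$. Since the target is $1$-dimensional it is enough to prove the restriction injective. Its kernel is the group of classes unramified outside $p$, crystalline at $\frakp'$, and locally trivial at $\frakp$; as the Hodge--Tate weight of $\psi(\psi^c)^{-1}$ at $\frakp'$ is $k-1 > 0$ we have $H^1_f(\Gal_{K_{\frakp'}}, \psi(\psi^c)^{-1}) = 0$, so ``crystalline at $\frakp'$'' already forces local triviality there, and the kernel is the strict Selmer group of classes unramified outside $p$ and locally trivial at both primes above $p$. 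The step I expect to be the main obstacle is the vanishing of this strict Selmer group: via Poitou--Tate duality it is Pontryagin-dual to a subquotient of a relaxed-at-$p$ Selmer group for $\psi^c\psi^{-1}\chi_{\cyc}$, which one bounds using the global Euler characteristic formula, the local computations of the previous paragraph, and the standing hypothesis. Granting this, the restriction map is injective, hence an isomorphism, and the proposition follows.
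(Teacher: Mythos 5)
Your overall architecture does track the paper's: reduce to the pair $(e_{11},e_{12})$ via Lemma~\ref{lem:restoK}, kill the diagonal entry using the finiteness of the class group, place $e_{12}$ in $H^1_{\frakp}(\Gal_K,\psi(\psi^c)^{-1})$, compute $\dim_E H^1(\Gal_{K_\frakp},\psi(\psi^c)^{-1})=1$ by the local Euler characteristic, and close with the sandwich $\dim_E D(E[\varepsilon]/\varepsilon^2)\geq 1$. But two of your steps have genuine problems.

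First, the opening claim that conditions (1)--(3) force $\widetilde\rho|_{\Gal_{\QQ_p}}$ to be crystalline is unjustified and in fact too strong. Knowing that the two eigendirections of $\rho|_{\Gal_{\QQ_p}}$ each deform crystallinely says nothing about the extension class between them. Worse, if the whole local representation were crystalline, then $e_{21}|_{\Gal_{K_\frakp}}$ would lie in $H^1_f(\Gal_{K_\frakp},\psi^c\psi^{-1})$, which vanishes because the Hodge--Tate weight of $\psi^c\psi^{-1}$ at $\frakp$ is $k-1$; by the $\Gal(K/\QQ)$-equivariance this forces $e_{12}|_{\Gal_{K_{\frakp'}}}=0$, i.e.\ $D(E[\varepsilon]/\varepsilon^2)\subseteq\ker\left[H^1_{\frakp}(\Gal_K,\psi(\psi^c)^{-1})\to H^1(K_{\frakp'},\psi(\psi^c)^{-1})\right]$, and whenever $e_f=2$ (which Proposition~\ref{prop:ramdeg} shows is equivalent to that kernel vanishing, and which occurs in the paper's numerical examples) this contradicts $\dim_E D(E[\varepsilon]/\varepsilon^2)\geq1$. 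The paper never asserts full local crystallinity: it extracts only that the diagonal entry $e_{22}$ is crystalline at $\frakp$ (this is the content of the crystalline-period condition~(3), via the analysis in \cite[Prop.\ 2.4]{Berg14}), and that $e_{11}$ is crystalline at $\frakp$ because condition~(2) plus Sen's theorem makes it potentially unramified; the constraint on $e_{12}$ is handled separately via the Hodge--Tate weights at $\frakp'$. Your derivation of ``$\phi=0$ and $e\in H^1_\frakp$'' needs to be replaced by this entry-by-entry argument (and the ``conversely, any such $e$ gives a deformation satisfying (1)--(3)'' assertion is likewise unproved --- though it is not needed once the dimension sandwich is in place).

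Second, and more seriously, the step you yourself flag as the main obstacle --- injectivity of $H^1_{\frakp}(\Gal_K,\psi(\psi^c)^{-1})\to H^1(\Gal_{K_\frakp},\psi(\psi^c)^{-1})$ --- cannot be obtained from Poitou--Tate duality and Euler characteristic formulas. Those tools compute differences of dimensions of a Selmer group and its dual (or parities) and yield lower bounds; they never force a global Selmer group to vanish. The kernel in question is contained in $H^1_f(\Gal_K,\psi(\psi^c)^{-1})$, and the vanishing of that group is a genuine arithmetic input: the paper invokes Rubin's proof of the Iwasawa Main Conjecture for imaginary quadratic fields together with the interpolation property of the $p$-adic $L$-function, the non-vanishing $L(\psi^{-1}\psi^c,0)\neq0$, and the standing hypothesis $\psi(\frakp)(\psi^c)^{-1}(\frakp)\neq p$. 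Without supplying this input, your argument does not close.
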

\begin{proof}
First of all, from the minimal ramification condition in $D$, we know that all $e_{ij}$ are unramified outside $p$.

From the proof of \cite[Prop.\ 2.4]{Berg14}, we know that $e_{22}$ is crystalline at $\frakp$.
Hence Eq.~(\ref{eq:inv11}) relating $e_{11}$ and $e_{22}$ implies that $e_{11}$ is crystalline at $\frakp'$.

 On the other hand, by definition of $D$, $e_{11}\in H^1(K,E)$ regarded as a first deformation of the trivial character has constant $\frakp$-Hodge--Tate--Sen weight $0$.
 Sen's theorem (\cite[Cor.\ of Thm.\ 11]{Sen81}) implies that $e_{11}$ is potentially unramified at $\frakp$.
But the potentially unramified subspace of $H^1(\Gal_{\QQ_p},E)$ coincides with the crystalline subspace, so $e_{11}$ is crystalline at $\frakp$.

 In addition, $e_{12}\in H^1(\Gal_K, \psi (\psi^c)^{-1})$ has to be crystalline at $\frakp'$.
 This is because restricting to $\Gal_{K_{\frakp}}$, the Hodge--Tate weight of $\psi$ is $0$ and the Hodge--Tate weight of $\psi^c$ is $k-1$, so restricting to $\Gal_{K_{\frakp'}} = c\Gal_{K_{\frakp}}c^{-1}$, the Hodge--Tate weight $k-1$ of $\psi$ is greater than the Hodge--Tate weight $0$ of $\psi^c$.

Hence we have the following diagram.
\begin{center}
\begin{tikzcd}
 H^1(\Gal_\QQ, V\tensor V^\ast) \ar[r,"\sim"] & H^1(\Gal_K,E) \directsum H^1(\Gal_K,\psi (\psi^c)^{-1}) \\
 D(E[\varepsilon]/\varepsilon^2) \ar[u,hookrightarrow] \ar[r,hookrightarrow] & H^1_f(\Gal_K,E) \directsum H^1_{\frakp}(\Gal_K,\psi(\psi^c)^{-1}) \ar[u,hookrightarrow].
\end{tikzcd}
\end{center}
Here $H^1_f(\Gal_K,\cdot)$ (resp.\ $H^1_\frakp(\Gal_K,\cdot)$) denotes the subspace of $H^1(\Gal_K,\cdot)$ of cohomology classes unramified/crystalline everywhere (resp.\ unramified/crystalline outside $\frakp$).

It is known that $H^1_f(\Gal_K,E)=0$ because of the finiteness of the class group of $K$.
By the Iwasawa Main Conjecture for imaginary quadratic fields (\cite[Thm.\ 4.1]{Rub91}, \cite[Prop.\ III.1.3, III.1.4]{deS}), we also know that $H^1_f(\Gal_K,\psi(\psi^c)^{-1})=0$.
In fact, since $\psi(\psi^c)^{-1}$ is of infinite type $\sigma^{k-1}\bar\sigma^{-(k-1)}$ which is critical, the Main Conjecture and the interpolation property of $p$-adic $L$-functions (\cite[Thm.\ II.4.14, Cor.\  II.6.7]{deS}) says that $H^1_f(\Gal_K,\psi(\psi^c)^{-1})=0$ if and only if $L(\psi^{-1}\psi^c,0)\neq0$ and $\psi(\frakp)(\psi^c)^{-1}(\frakp)\neq p$, which is true.

 The vanishing of $H^1_f(\Gal_K,E)=0$ then implies
\[
 H^1_{\frakp}(\Gal_K,\psi(\psi^c)^{-1}) \stackrel{\loc_\frakp}{\rightarrow} H^1(\Gal_{K_\frakp},\psi(\psi^c)^{-1})
\]
is injective.

We conclude that the bottom map of the diagram becomes 
\[
 D(E[\varepsilon]/\varepsilon^2) \inj H^1(K_{\frakp},\psi(\psi^c)^{-1}).
\]

Since the eigencurve is equidimensional of dimension $1$, $\dim_E D(E[\varepsilon]/\varepsilon^2)\geq1$.
On the other hand, $\dim_E H^1(\Gal_{K_\frakp}, \psi(\psi^c)^{-1})=1$ by the local Euler characteristic formula since $\psi(\psi^c)^{-1}\neq E\text{ or } E(1)$,  
Hence $D(E[\varepsilon]/\varepsilon^2) \inj H^1(K_{\frakp},\psi(\psi^c)^{-1})$ must be an isomorphism.
\end{proof}

\begin{corollary} \label{cor:D0dim1}
 $\dim_E {D^0}(E[\varepsilon]/\varepsilon^2) = \dim_E D(E[\varepsilon]/\varepsilon^2) = 1$.
\end{corollary}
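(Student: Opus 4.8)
The plan is to simply chain together the two facts that have already been assembled. First, the discussion preceding \cite[Prop.~2.4]{Berg14}, invoked just before the definition of the functors $D$ and $D^0$, provides the equality $\dim_E D^0(E[\varepsilon]/\varepsilon^2) = \dim_E D(E[\varepsilon]/\varepsilon^2)$: the content there is that imposing condition (4)—that $\rho_A$ have constant Hodge–Tate–Sen weights, not merely Hodge–Tate–Sen weight $0$ on the $\frakp$-component—does not shrink the tangent space, because a first-order deformation of $\rho$ already satisfying condition (3) (the existence of a lift $\widetilde a_p$ with $D^+_\cris(\left.V_A\right|_{\Gal_{\QQ_p}})^{\varphi=\widetilde a_p}\neq 0$) automatically has constant Hodge–Tate–Sen weights. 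Second, Proposition~\ref{prop:Ddim1} identifies $D(E[\varepsilon]/\varepsilon^2)$ with $H^1(\Gal_{K_{\frakp}},\psi(\psi^c)^{-1})$, which is one-dimensional over $E$ by the local Euler characteristic formula, using the running hypothesis $\psi(\frakp)(\psi^c)^{-1}(\frakp)\neq p$ to ensure $\psi(\psi^c)^{-1}\neq E$ or $E(1)$. Combining the two statements gives $\dim_E D^0(E[\varepsilon]/\varepsilon^2) = \dim_E D(E[\varepsilon]/\varepsilon^2) = 1$, as claimed.

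There is essentially no obstacle here: the substantive work has already been carried out in Proposition~\ref{prop:Ddim1}, where the global Selmer groups $H^1_f(\Gal_K,E)$ and $H^1_f(\Gal_K,\psi(\psi^c)^{-1})$ were shown to vanish via the finiteness of the class group of $K$ and the Iwasawa Main Conjecture for imaginary quadratic fields, together with the comparison $\dim_E D^0 = \dim_E D$ imported from \cite{Berg14}. The only point worth verifying is that our functors $D$ and $D^0$, as defined above, match the hypotheses of that reference—in particular that $f$ is the critical $p$-stabilization of a single newform and that condition (3) is stated with a \emph{variable} lift $\widetilde a_p$ of $a_p$—which holds by construction, so the corollary is immediate.
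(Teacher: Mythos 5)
Your proposal is correct and matches the paper's (implicit) argument: the corollary is stated without proof precisely because it follows by combining the equality $\dim_E D^0(E[\varepsilon]/\varepsilon^2) = \dim_E D(E[\varepsilon]/\varepsilon^2)$ already imported from \cite[Prop.~2.4]{Berg14} in Section~\ref{subsec:Galoiscohom} with the computation $\dim_E D(E[\varepsilon]/\varepsilon^2)=1$ of Proposition~\ref{prop:Ddim1}. Your additional remark about checking that the functors match the hypotheses of the cited reference is sensible diligence but does not change the route.
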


\subsection{Fourier coefficients}
 As in the introduction, let $f'=\sum_{n=1}^\infty a_n'q^n $ be the normalized generalized Hecke eigenform associated to $f$.
 Let $\widetilde{f} = f+f'\varepsilon$ and $\widetilde{\rho}\colon \Gal_\QQ\rightarrow \GL(\widetilde{V})$ the Galois representation associated to $\widetilde{f}$.
Here $\widetilde{V}$ is a free $E[\varepsilon]/\varepsilon^2$-module of rank $2$.
Then $\widetilde{\rho}$ is in ${D^0}(E[\varepsilon]/\varepsilon^2)$.

\begin{proposition} \label{prop:ap}
 $a_p'=0$.
\end{proposition}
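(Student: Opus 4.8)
The plan is to exploit the crystalline period at $p$ carried by $\widetilde\rho$ together with the fact that, after restriction to $\Gal_K$, the deformation $\widetilde\rho$ is off-diagonal to first order. First, normalize: since $a_1=1$ and $a_1'=0$, the form $\widetilde f=f+f'\varepsilon$ is a normalized Hecke eigenform, so its $U_p$-eigenvalue equals its $q^p$-coefficient, namely $\widetilde a_p:=a_p+a_p'\varepsilon$. By the construction of the Galois representation attached to the overconvergent eigenform $\widetilde f$ (Kisin's theorem on overconvergent modular forms, in the $E[\varepsilon]/\varepsilon^2$-coefficient version underlying \cite[Prop.\ 2.4]{Berg14} and its proof), $\widetilde\rho$ carries a crystalline period at $p$ with Frobenius eigenvalue \emph{exactly} $\widetilde a_p$, lifting that of $f$: there is $x\in D^+_\cris(\left.\widetilde V\right|_{\Gal_{\QQ_p}})$ with $\varphi x=\widetilde a_p x$ whose reduction $x_0$ modulo $\varepsilon$ is a \emph{nonzero} element of $D^+_\cris(\left.V\right|_{\Gal_{\QQ_p}})^{\varphi=a_p}$.

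Next I would pin down $x_0$ precisely. Because $p$ splits in $K$ with $\frakp$ the prime cut out by $\iota_p\circ\sigma$, we have $\Gal_{\QQ_p}=\Gal_{K_\frakp}$ and $\left.V\right|_{\Gal_{\QQ_p}}=\left.\psi\right|_{\Gal_{K_\frakp}}\directsum\left.\psi^c\right|_{\Gal_{K_\frakp}}$, with $\psi$ unramified of Hodge--Tate weight $0$ and $\psi^c$ crystalline of Hodge--Tate weight $k-1$; as recalled above, $D^+_\cris(\left.V\right|_{\Gal_{\QQ_p}})^{\varphi=a_p}=D^+_\cris(\left.\psi^c\right|_{\Gal_{K_\frakp}})$, which, $\psi^c$ being a character, is one-dimensional over $E$ with $\varphi$ acting by the scalar $a_p$. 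Hence $x_0=v_2\tensor t_0$ for some $t_0\in B_\cris^+$ with $g\,t_0=\psi^c(g)^{-1}t_0$ for all $g\in\Gal_{K_\frakp}$ and $\varphi\,t_0=a_p\,t_0$.

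Now expand $x=x_0+\varepsilon x_1$ with $x_1=v_1\tensor s_1+v_2\tensor s_2\in V\tensor_{\QQ_p}B_\cris^+$ and impose the two conditions $g\cdot x=x$ (all $g\in\Gal_{K_\frakp}$) and $\varphi x=\widetilde a_p x$. The crucial input is that $e_{11}$ and $e_{22}$ vanish — they lie in $H^1_f(\Gal_K,E)=0$, exactly as in the proof of Proposition~\ref{prop:Ddim1} — so $\widetilde\rho(g)\,v_2=\psi^c(g)\,v_2+\varepsilon\cdot(\text{a multiple of }v_1)$, with no $\varepsilon$-term along $v_2$. Reading $g\cdot x=x$ in the $v_2$-component then forces $g\,s_2=\psi^c(g)^{-1}s_2$ for all $g$, i.e.\ $v_2\tensor s_2\in D^+_\cris(\left.\psi^c\right|_{\Gal_{K_\frakp}})=E\cdot(v_2\tensor t_0)$, so $s_2$ is an $E$-multiple of $t_0$. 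Reading $\varphi x=\widetilde a_p x$ in the $v_2$-component then gives $\varphi s_2-a_p s_2=a_p'\,t_0$; but the left-hand side vanishes because $\varphi$ acts on $D^+_\cris(\left.\psi^c\right|_{\Gal_{K_\frakp}})$ by $a_p$. Therefore $a_p'\,t_0=0$, and since $t_0\neq0$ we conclude $a_p'=0$.

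The step I expect to be the main obstacle is the one invoked in the first paragraph: that the crystalline period of the $E[\varepsilon]/\varepsilon^2$-valued eigenform $\widetilde f$ has Frobenius eigenvalue \emph{exactly} $\widetilde a_p=a_p+a_p'\varepsilon$ and admits a \emph{primitive} representative (one that is nonzero modulo $\varepsilon$). Condition (3) in the definition of $D$, as literally stated, only yields $D^+_\cris(\left.\widetilde V\right|_{\Gal_{\QQ_p}})^{\varphi=\widetilde a_p}\neq0$, which is automatic — it always contains $\varepsilon\cdot D^+_\cris(\left.V\right|_{\Gal_{\QQ_p}})^{\varphi=a_p}$ — so the real content is this finer relative statement, which is precisely what Kisin's theorem provides in families. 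Granting it, the computation above is short and, notably, never uses the lower off-diagonal cocycle $e_{21}$, hence is insensitive to the behavior of $e_{12}$ at $\frakp'$ that governs Proposition~\ref{prop:main}.
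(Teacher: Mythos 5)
Your proof is correct and follows essentially the same route as the paper's: both arguments reduce the crystalline-period condition at $\frakp$ to the $\psi^c$-line and use $e_{22}\in H^1_f(\Gal_K,E)=0$ to see that $\varphi$ still acts there by $a_p$ on the nose, forcing $a_p'=0$. Your remark that the condition $D^+_\cris(\left.\widetilde{V}\right|_{\Gal_{\QQ_p}})^{\varphi=a_p+a_p'\varepsilon}\neq 0$ is literally automatic (it always contains $\varepsilon\cdot D^+_\cris(\left.V\right|_{\Gal_{\QQ_p}})^{\varphi=a_p}$) is well taken: the paper's proof silently relies on the same primitivity of the crystalline period that you make explicit via Kisin's theorem.
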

\begin{proof}
 The $U_p$-eigenvalue of $\widetilde{f}$ is its $p$-th Fourier coefficient $a_p+a_p'\varepsilon$.
 In terms of Galois representation, the $U_p$-eigenvalue $a_p+a_p'\varepsilon$ is such that
\[
 D_\cris^+(\left.\widetilde{V}\right|_{\Gal_{K_\frakp}})^{\varphi=a_p+a_p'\varepsilon} \neq 0.
\]
Note that 
\[
 D_\cris^+(\left.\widetilde{V}\right|_{\Gal_{K_\frakp}})^{\varphi=a_p+a_p'\varepsilon} 
= D_\cris^+(\left.e_{22}\psi^c\right|_{\Gal_{K_\frakp}})^{\varphi=a_p+a_p'\varepsilon}.
\]
 But we have seen that $e_{22}\in H^1_f(\Gal_K,E) = 0$ in the proof of Proposition~\ref{prop:Ddim1}.
 As a result, $\left.e_{22}\psi^c\right|_{\Gal_{K_\frakp}} = \left.\psi^c\right|_{\Gal_{K_\frakp}}\tensor_E E[\varepsilon]/\varepsilon^2$, and hence $a_p'$ must be equal to $0$.
 \end{proof}

We have seen that there exists a basis $\{v_1, v_2 = \rho(c) v_1 \}$ of $V$ such that
\[
 \left.\rho\right|_{\Gal_K} \sim 
\begin{pmatrix}\psi&\\&\psi^c\end{pmatrix},
\quad
  \left.\rho\right|_{\Gal_\QQ \setminus \Gal_K} \sim 
\begin{pmatrix}&\eta'\\ \eta&\end{pmatrix},
\]
where $\eta$ and $\eta'$ are functions on $\Gal_\QQ\setminus \Gal_K$ given by $\eta(\cdot) = \psi(c\cdot)$ and $\eta'(\cdot) = \psi(\cdot c^{-1})$.

\begin{proposition} \label{prop:tr} \hfill
\begin{enumerate}
 \item For $g\in \Gal_K$,
$\Tr \widetilde{\rho}(g) = \psi(g)+\psi^c(g)$.
 \item For $g\in \Gal_\QQ \setminus \Gal_K$, 
$\Tr \widetilde{\rho}(g) = \eta(g)e_{12}(g^2)\varepsilon = \eta'(g)e_{21}(g^2)\varepsilon$.
\end{enumerate}
\end{proposition}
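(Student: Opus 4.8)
The plan is to compute $\Tr\widetilde\rho(g)$ directly from the cocycle $\rho'$, working in the fixed basis $\{v_1,v_2\}$ of the introduction in which $\left.\rho\right|_{\Gal_K}$ is diagonal and $\left.\rho\right|_{\Gal_\QQ\setminus\Gal_K}$ is anti-diagonal. Since $\widetilde\rho=(\id+\rho'\varepsilon)\rho$, for every $g\in\Gal_\QQ$ we have $\widetilde\rho(g)=\rho(g)+\rho'(g)\rho(g)\varepsilon$, hence $\Tr\widetilde\rho(g)=\Tr\rho(g)+\Tr\bigl(\rho'(g)\rho(g)\bigr)\varepsilon$. The one input I reuse from the proof of Proposition~\ref{prop:Ddim1} is that the two ``diagonal'' components $e_{11},e_{22}$ of $\left.\rho'\right|_{\Gal_K}$ lie in $H^1_f(\Gal_K,E)=0$; because $E$ carries the trivial action, $H^1(\Gal_K,E)=\Hom_{\mathrm{cont}}(\Gal_K,E)$ has no coboundaries, so $e_{11}$ and $e_{22}$ are the zero function on $\Gal_K$.

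For (1), let $g\in\Gal_K$, so $\rho(g)=\begin{pmatrix}\psi(g)&\\&\psi^c(g)\end{pmatrix}$ and $\Tr\rho(g)=\psi(g)+\psi^c(g)$. Writing $\left.\rho'\right|_{\Gal_K}$ entrywise as $\begin{pmatrix}e_{11}&e_{12}\\e_{21}&e_{22}\end{pmatrix}$ in this basis (this is precisely the decomposition of Lemma~\ref{lem:restoK}), the matrix $\rho'(g)\rho(g)$ has diagonal entries $e_{11}(g)\psi(g)$ and $e_{22}(g)\psi^c(g)$, both of which vanish. Hence $\Tr\widetilde\rho(g)=\psi(g)+\psi^c(g)$.

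For (2), let $g\in\Gal_\QQ\setminus\Gal_K$, so $\rho(g)=\begin{pmatrix}&\eta'(g)\\ \eta(g)&\end{pmatrix}$ has trace $0$ and $\Tr\widetilde\rho(g)=\Tr\bigl(\rho'(g)\rho(g)\bigr)\varepsilon$. Write $\rho'(g)=\begin{pmatrix}a&b\\c&d\end{pmatrix}$ with $a,b,c,d\in E$; a direct multiplication gives $\Tr\bigl(\rho'(g)\rho(g)\bigr)=\eta(g)b+\eta'(g)c$. To identify this with $\eta(g)e_{12}(g^2)$ and $\eta'(g)e_{21}(g^2)$ (legitimate since $g^2\in\Gal_K$), I apply the adjoint cocycle relation $\rho'(g^2)=\rho'(g)+\rho(g)\rho'(g)\rho(g)^{-1}$: conjugating by the anti-diagonal $\rho(g)$ interchanges the two diagonal entries of $\rho'(g)$ and multiplies the $(1,2)$- and $(2,1)$-entries by $\eta'(g)\eta(g)^{-1}$ and $\eta(g)\eta'(g)^{-1}$, so
\[
\rho'(g^2)=\begin{pmatrix}a+d & b+\eta'(g)\eta(g)^{-1}c\\ c+\eta(g)\eta'(g)^{-1}b & a+d\end{pmatrix}.
\]
Matching entries against Lemma~\ref{lem:restoK} and using $e_{11}(g^2)=e_{22}(g^2)=0$ yields $a+d=0$, $e_{12}(g^2)=b+\eta'(g)\eta(g)^{-1}c$, and $e_{21}(g^2)=c+\eta(g)\eta'(g)^{-1}b$. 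Multiplying the last two identities by $\eta(g)$ and $\eta'(g)$ respectively, both sides become $\eta(g)b+\eta'(g)c=\Tr\bigl(\rho'(g)\rho(g)\bigr)$, which proves (2).

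The only point that genuinely needs care is pinning down, once and for all, the normalization of $e_{12}$ and $e_{21}$ consistently with Lemma~\ref{lem:restoK} — that the $(1,2)$-entry of $\left.\rho'\right|_{\Gal_K}$ is the $\psi(\psi^c)^{-1}$-valued cocycle and the $(2,1)$-entry the $\psi^c\psi^{-1}$-valued one — and then tracking the scalars $\eta(g),\eta'(g)$ correctly through the conjugation $\rho(g)\rho'(g)\rho(g)^{-1}$; everything else is routine $2\times2$ matrix algebra over $E[\varepsilon]/\varepsilon^2$.
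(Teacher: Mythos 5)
Your proposal is correct and follows essentially the same route as the paper: part (1) via the vanishing of the diagonal components $e_{11},e_{22}$ in $H^1_f(\Gal_K,E)=0$, and part (2) via the cocycle relation $\rho'(g^2)=\rho'(g)+\rho(g)\rho'(g)\rho(g)^{-1}$ with the same $2\times2$ conjugation computation. Your explicit remark that a vanishing class in $H^1(\Gal_K,E)=\Hom_{\mathrm{cont}}(\Gal_K,E)$ forces the cocycle itself to vanish is a welcome clarification the paper leaves implicit.
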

\begin{proof}
For (1), picking lifts $\widetilde{v_1}, \widetilde{v_2} \in\widetilde{V}$ of $v_1, v_2$, we may write 
\[
 \widetilde{\rho} \sim (\id+\begin{pmatrix}e_{11}&e_{12}\\e_{21}&e_{22}\end{pmatrix}\varepsilon)\begin{pmatrix}\psi&\\&\psi^c\end{pmatrix}.
\]
 We have seen that $e_{11},e_{22} \in H^1_f(\Gal_K,E) = 0$ in the proof of Proposition~\ref{prop:Ddim1}.
 Hence for any $g\in \Gal_K$, $\Tr \widetilde{\rho}(g) = \psi(g)+\psi^c(g)$.

 For (2), since $\rho'\in H^1(\Gal_\QQ, V\tensor V^\ast)$, we have
\[
 \rho'(g^2)=\rho'(g)+\rho(g)\rho'(g)\rho(g)^{-1}.
\]
Written in terms of matrices in the basis $\widetilde{v_1}$, $\widetilde{v_2}$, this says
\[
 \begin{pmatrix}
  e_{11}(g^2) & e_{12}(g^2) \\ e_{21}(g^2) & e_{22}(g^2)
 \end{pmatrix}
=
 \begin{pmatrix}
  e_{11}(g) & e_{12}(g) \\ e_{21}(g) & e_{22}(g)
 \end{pmatrix}
+ 
 \begin{pmatrix}
  e_{22}(g) & \frac{\eta'(g)}{\eta(g)}e_{21}(g) \\ \frac{\eta(g)}{\eta'(g)}e_{12}(g) & e_{11}(g)
 \end{pmatrix}.
\]
Hence 
\begin{align*}
 e_{11}(g^2)&=e_{11}(g)+e_{22}(g)\\
 e_{12}(g^2)&=e_{12}(g)+\frac{\eta'(g)}{\eta(g)}e_{21}(g). 
\end{align*}
In particular, $\Tr \widetilde{\rho}(g)=\eta(g)e_{12}(g)\varepsilon+\eta'(g)e_{21}(g)\varepsilon$ is equal to $\eta(g)e_{12}(g^2)\varepsilon=\eta'(g)e_{21}(g^2)\varepsilon$.
\end{proof}

\begin{corollary} \label{cor:al}
 \[
  a_\ell'= 
\begin{cases} 
0 & \text{if $\ell\nmid Np$ splits in $K$}\\ 
\psi(c\Frob_\ell)e(\Frob_\ell^2) &\text{if $\ell\nmid Np$ is inert in $K$.}
\end{cases}
 \]
Here $e\in H^1_{\frakp}(\Gal_K,\psi(\psi^c)^{-1})$ is the unique cohomology class mapping $\Frob_{\ell_0}^2\in \Gal_K$ to $\psi(c\Frob_{\ell_0})^{-1}$, where $\ell_0$ is the smallest rational prime which is inert in $K$.
\end{corollary}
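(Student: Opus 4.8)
The plan is to evaluate the trace identities of Proposition~\ref{prop:tr} at Frobenius elements, reading off $a_\ell'$ according to whether $\ell$ splits or is inert in $K$, and then to identify the cocycle $e_{12}$ appearing in the inert case with the normalized class $e$ of the statement by imposing $a_{\ell_0}'=1$.

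First I would use that $\widetilde f=f+f'\varepsilon$ is a normalized Hecke eigenform with attached representation $\widetilde\rho\in D^0(E[\varepsilon]/\varepsilon^2)$, so that $a_\ell+a_\ell'\varepsilon=\Tr\widetilde\rho(\Frob_\ell)$ for all $\ell\nmid Np$. If $\ell$ splits in $K$ one may choose the Frobenius so that $\Frob_\ell\in\Gal_K$; then Proposition~\ref{prop:tr}(1) gives $\Tr\widetilde\rho(\Frob_\ell)=\psi(\Frob_\ell)+\psi^c(\Frob_\ell)$, which has vanishing $\varepsilon$-part, whence $a_\ell'=0$. If $\ell$ is inert in $K$ then every Frobenius $\Frob_\ell$ lies in $\Gal_\QQ\setminus\Gal_K$, and $\Frob_\ell^2\in\Gal_K$ is a Frobenius at the degree-two prime $(\ell)$; Proposition~\ref{prop:tr}(2) then gives $\Tr\widetilde\rho(\Frob_\ell)=\eta(\Frob_\ell)\,e_{12}(\Frob_\ell^2)\,\varepsilon$, and since $\eta(g)=\psi(cg)$, comparing $\varepsilon$-coefficients yields $a_\ell'=\psi(c\Frob_\ell)\,e_{12}(\Frob_\ell^2)$.

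It remains to show $e_{12}=e$. By Proposition~\ref{prop:Ddim1} and its proof, $e_{12}$ represents a class in the one-dimensional space $H^1_\frakp(\Gal_K,\psi(\psi^c)^{-1})$. Evaluating the inert formula at $\ell=\ell_0$ and using $a_{\ell_0}'=1$ gives $e_{12}(\Frob_{\ell_0}^2)=\psi(c\Frob_{\ell_0})^{-1}\neq0$, so $e_{12}$ is a nonzero class, hence a generator. I would then observe that the value at $\Frob_{\ell_0}^2$ depends only on the class: a coboundary of $\psi(\psi^c)^{-1}$ takes the value $\bigl(\psi(\psi^c)^{-1}(\Frob_{\ell_0}^2)-1\bigr)x$ there, and $\psi(\psi^c)^{-1}(\Frob_{\ell_0}^2)=1$ because $\psi(g^2)=\psi^c(g^2)$ for every $g\in\Gal_\QQ\setminus\Gal_K$ (writing $g^2=h^ch$ with $h=c^{-1}g\in\Gal_K$, both sides equal $\psi(h)\psi^c(h)$). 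Thus evaluation at $\Frob_{\ell_0}^2$ is a nonzero linear functional on the one-dimensional $H^1_\frakp(\Gal_K,\psi(\psi^c)^{-1})$, so there is a unique class there taking the value $\psi(c\Frob_{\ell_0})^{-1}$, namely $e_{12}$; this is the class $e$. Substituting $e=e_{12}$ into the inert formula gives $a_\ell'=\psi(c\Frob_\ell)\,e(\Frob_\ell^2)$, completing the proof.

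I do not foresee a real obstacle: the substance is already in Propositions~\ref{prop:ap}, \ref{prop:tr} and \ref{prop:Ddim1}. The only genuine point of care is the last paragraph — checking that $e_{12}(\Frob_{\ell_0}^2)$ is representative-independent (so that "the unique cohomology class mapping $\Frob_{\ell_0}^2$ to $\psi(c\Frob_{\ell_0})^{-1}$" is well-posed) and that $e_{12}\neq0$, equivalently that $\widetilde\rho$ is a nontrivial first-order deformation, which holds because $f'$ is not a scalar multiple of $f$. One should also record the standing assumption $\ell_0\nmid Np$, so that $\Frob_{\ell_0}$ is unramified for all the characters in play and the normalization at $\ell_0$ is governed by the same formula.
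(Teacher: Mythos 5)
Your proposal is correct and follows essentially the same route as the paper: evaluate the trace formulas of Proposition~\ref{prop:tr} at Frobenius elements in the split and inert cases, check that $e_{12}(\Frob_\ell^2)$ is well defined on cohomology classes because $\psi(\psi^c)^{-1}$ is trivial on $\Frob_\ell^2$ for $\ell$ inert, and pin down $e=e_{12}$ via the normalization $a_{\ell_0}'=1$ together with the one-dimensionality of $H^1_{\frakp}(\Gal_K,\psi(\psi^c)^{-1})$ from Proposition~\ref{prop:Ddim1}. Your verification that coboundaries vanish at $\Frob_{\ell_0}^2$ is a slightly more explicit version of the paper's observation that $\psi^c(\Frob_\ell^2)=\psi(\Frob_\ell^2)$, but the substance is identical.
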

\begin{proof}
 Note that whenever $\ell$ is a rational prime inert in $K$, $e_{12}(\Frob_\ell^2)$ is well-defined, independent of the choice of a cocycle representative of $e_{12} \in H^1(\Gal_K, \psi(\psi^c)^{-1})$.
This is because
\[
 \psi^c(\Frob_\ell^2)=\psi(c\Frob_\ell^2c^{-1})=\psi(\Frob_{c\ell c^{-1}}^2)=\psi(\Frob_\ell^2), 
\]

 Since $\Tr(\widetilde{\rho}(\Frob_\ell)) = a_\ell+a_\ell'\varepsilon$, the formula then follows from Proposition~\ref{prop:tr}.
 The choice of $e$ is because our normalization of $f'$ requires $a_{\ell_0}'=1$.
\end{proof}

\section{Numerical examples} \label{sec:num}
The following examples are generated by a code in the courtesy of Alan Lauder, written based on \cite{Lau}.
We acknowledge his help and the use of  Magma programming language.

\begin{example}
Let $K$ be the imaginary quadratic field $\QQ(\sqrt{-1})$.
Let $\psi$ be the Grossencharacter of $K$ with trivial conductor and of infinity type $(4,0)$.
Then $G_\psi$, the CM modular form associated to $\psi$, is of weight $k=5$, level $N=4$, and nebentypus $\chi\colon (\ZZ/4)^\times \rightarrow \CC^\times$ the non-trivial Dirichlet character of modulus $4$. 
The first few Fourier coefficients of $G_\psi$ are
\[
 G_\psi=q-4q^2+16q^4-14q^5-64q^8+81q^9+\cdots.
\]
We pick $p=5$, a prime which splits in $K$ and does not divide $N$.
Let $\alpha$, $\beta$ be the root of $X^2-a_p X+\chi(p)p^{k-1}=X^2+14X+5^4$ with $p$-adic valuation $k-1=4$ and $0$, respectively.
Let $f'$ be the (non-normalized) generalized Hecke eigenform with the same Hecke eigenvalues as $f(z)\defeq G_\psi(z)-\beta G_\psi(pz)$, scaled such that the leading coefficient $a_2'$ is $1$.
We computed numerically the first few non-zero Fourier coefficients $a_\ell'$ of $f'$ modulo $5^{24}$ for $\ell\nmid N$ a prime.

\begin{center}
\small
\begin{tabular}{r|r}
 \multicolumn{1}{c|}{$\ell$} & \multicolumn{1}{c}{$a_\ell' \bmod 5^{24}$} \\
\hline
  3 & 43300771101273669 \\
  7 & 43442244692236520 \\
 11 & 30279465837717252 \\ 
 19 & 11784730043200626 \\ 
 23 & 56240881617036337 \\ 
 31 & 18613606380354261 \\ 
 43 & 39991538540718615 \\ 
 47 & 53268861392126849 \\ 
 59 & 35400357120186448 \\ 
 67 & 31496794802809616 \\
 71 & 10538304364997549 \\ 
 79 & 19184781428210594 \\ 
 83 & 24773813366422376 
\end{tabular}
\end{center}
\end{example}

\begin{example}
Let $K$ be the imaginary quadratic field $\QQ(\sqrt{-3})$.
Let $\psi$ be the Grossencharacter of $K$ with trivial conductor and of infinity type $(6,0)$.
Then $G_\psi$, the CM modular form associated to $\psi$, is of weight $k=7$, level $N=3$, and nebentypus $\chi\colon (\ZZ/3)^\times \rightarrow \CC^\times$ the non-trivial Dirichlet character of modulus $3$. 
The first few Fourier coefficients of $G_\psi$ are
\[
 G_\psi=q-27q^3+64q^4-286q^7+729q^9+\cdots.
\]
We pick $p=7$, a prime which splits in $K$ and does not divide $N$.
Let $\alpha$, $\beta$ be the root of $X^2-a_p X+\chi(p)p^{k-1}=X^2+286X+7^6$ with $p$-adic valuation $k-1=6$ and $0$, respectively.
Let $f'$ be the (non-normalized) generalized Hecke eigenform with the same Hecke eigenvalues as $f(z)\defeq G_\psi(z)-\beta G_\psi(pz)$, scaled such that the leading coefficient $a_2'$ is $1$.
We compute numerically the first few non-zero Fourier coefficients $a_\ell'$ of $f'$ modulo $5^{22}$ for $\ell\nmid N$ a prime.
\begin{center}
\small
\begin{tabular}{r|r}
 \multicolumn{1}{c|}{$\ell$} & \multicolumn{1}{c}{$a_\ell' \bmod 7^{22}$} \\
\hline
 5 & 666108372229480561 \\
11 & 88592821880322831 \\
17 & 2092810930868948813 \\
23 & 1330989883549587564 \\
29 & 948498584988948579 \\
41 & 254724600121344265 \\
47 & 524234543371386261 \\
53 & 1745806937126778885 \\
59 & 3656628657475311802 \\
71 & 903737885018479401 \\
83 & 2252941180864123161 \\
89 & 2944581429297441793 \\
\end{tabular}
\end{center}
\end{example}

\section{Dimension of generalized Hecke eigenspace} \label{sec:ramdeg}
Let $f=\sum_{n\geq1} a_nq^n$ be as in the introduction, the critical p-stabilization of a CM modular form associated to a Grossencharacter $\psi$.
As before, $f$ is of weight $k$, level $\Gamma_1(N)\cap \Gamma_0(p)$, and nebentypus $\chi$.
Let $\rho_f \colon \Gal_\QQ \rightarrow \GL(V_f)$ be the $p$-adic Galois representation associated to $f$, where $V_f$ is a $2$-dimensional $E$-vector space for some finite extension $E$ of $\QQ_p$.

Recall that by \cite[Prop.\ 7.1]{Col96}, $f=\theta^{k-1}(g)$ for some $g\in M_{2-k}^\dagger(\Gamma_1(N))$.
Then $\rho_g\colon \Gal_\QQ \rightarrow \GL(V_g)$, the $p$-adic Galois representation associated to $g$, satisfies $\rho_g \isom \rho_f\tensor \chi_\cyc^{-k+1}$, where $\chi_\cyc$ is the $p$-adic cyclotomic character.

Let $e_f$ (resp.\ $e_g$) be the dimension of the generalized Hecke eigenspace in $M_k^\dagger(\Gamma_1(N))$ (resp.\ $M_{2-k}^\dagger(\Gamma_1(N))$) with Hecke eigenvalues the same as that of $f$ (resp.\ $g$).
The following lemma is a corollary of Coleman's  classicality theorem (\cite[Thm.\ 7.2]{Col96}).
Geometrically, $e_f$ (resp.\ $e_g$) is the ramification degree at the point $f$ (resp.\ $g$) of the weight map from the eigencurve to the weight space.

\begin{lemma} \label{lem:ramdeg}
 $e_f=e_g+1$.
\end{lemma}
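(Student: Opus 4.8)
The plan is to exploit the $\theta$-operator as the bridge between the two generalized eigenspaces. Recall $\theta = q\frac{d}{dq}$ acts on $q$-expansions of overconvergent forms, and by \cite[Prop.\ 7.1]{Col96} we have $f = \theta^{k-1}(g)$ for a suitable $g \in M_{2-k}^\dagger(\Gamma_1(N))$ of (necessarily non-critical, in fact weight $2-k<1$) slope. The key structural fact is that $\theta^{k-1}$ is Hecke-equivariant up to the standard twist of eigenvalues by $\ell^{k-1}$ on $T_\ell$ (and $p^{k-1}$ on $U_p$): this is exactly the relation $\rho_g \isom \rho_f \tensor \chi_\cyc^{-k+1}$ recorded above. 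Thus $\theta^{k-1}$ carries the generalized Hecke eigenspace attached to $g$ \emph{into} the one attached to $f$.

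First I would make precise the map on generalized eigenspaces. Write $M_{2-k}^\dagger[I_g^\infty]$ and $M_k^\dagger[I_f^\infty]$ for the two generalized Hecke eigenspaces, of dimensions $e_g$ and $e_f$ respectively. Since $\theta^{k-1}$ intertwines the two Hecke actions (matching the maximal ideals $I_g$ and $I_f$), it restricts to a linear map $\theta^{k-1}\colon M_{2-k}^\dagger[I_g^\infty] \to M_k^\dagger[I_f^\infty]$. The operator $\theta$ is injective on overconvergent forms of non-positive weight (it strictly raises the "pole order"/fails to be a unit only on the Eisenstein family $E_{k-1}$-type obstruction, which does not intervene here because $2-k \le 0$ and $g$ is cuspidal-adjacent with the relevant slope); more carefully, $\theta^{k-1}$ is known to be injective on $M_{2-k}^\dagger$ precisely because $f=\theta^{k-1}(g)$ is nonzero and, more robustly, because the kernel of $\theta$ on overconvergent forms of weight $\le 0$ is trivial. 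Hence $\theta^{k-1}$ embeds $M_{2-k}^\dagger[I_g^\infty]$ as a subspace of $M_k^\dagger[I_f^\infty]$, giving $e_f \ge e_g$.

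Next I would identify the cokernel. Coleman's classicality theorem \cite[Thm.\ 7.2]{Col96} states that in critical slope the obstruction to a generalized eigenform being classical is measured precisely by the image of $\theta^{k-1}$: the quotient $M_k^\dagger[I_f^\infty] / \theta^{k-1}\big(M_{2-k}^\dagger[I_g^\infty]\big)$ is identified with the space of \emph{classical} forms with the given Hecke eigensystem, which here is the one-dimensional line spanned by the classical CM form $f_0$ (equivalently its $p$-stabilization $f$ — note $f$ itself is classical). So the cokernel is exactly $1$-dimensional. Combining the short exact sequence
\[
0 \to M_{2-k}^\dagger[I_g^\infty] \xrightarrow{\theta^{k-1}} M_k^\dagger[I_f^\infty] \to (\text{classical eigenspace}) \to 0
\]
with $\dim(\text{classical eigenspace}) = 1$ yields $e_f = e_g + 1$.

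The main obstacle I anticipate is the precise bookkeeping in the middle step: one must verify that $\theta^{k-1}$ really does land in the \emph{generalized} eigenspace (not merely the eigenspace) and is injective there, and that Coleman's theorem is being invoked in the generalized-eigenspace (rather than honest-eigenspace) form — i.e. that the short exact sequence above is exact on the right with cokernel exactly the classical forms. This requires knowing that every classical form with eigensystem $I_f$ lifts to the source, which is automatic since classical forms of weight $k\geq 2$ and critical slope, when they exist, are accounted for by $\theta^{k-1}$ of lower-weight overconvergent forms — precisely the content of \cite[Thm.\ 7.2]{Col96}. The rest is formal linear algebra on finite-dimensional vector spaces.
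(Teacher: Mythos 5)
The paper itself offers no written proof of this lemma --- it is asserted as a corollary of Coleman's classicality theorem, with the geometric interpretation via ramification degrees of the weight map --- so your instinct to compare the two generalized eigenspaces through $\theta^{k-1}$ is certainly the intended skeleton: the injectivity of $\theta^{k-1}$ on the source (kernel of $\theta$ on $q$-expansions is the constants, which do not occur here) and the Hecke-equivariance up to the twist by $\chi_\cyc^{k-1}$ are both fine, and they give $e_f \geq e_g$.

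The gap is in your treatment of the cokernel, and it is not just bookkeeping. You claim a short exact sequence whose third term is the classical eigenspace, and in your final paragraph you justify right-exactness by saying that every classical form with eigensystem $I_f$ \emph{lifts to the source}, i.e.\ lies in the image of $\theta^{k-1}$. But that is exactly \cite[Prop.\ 7.1]{Col96} in the CM case: $f$ itself is classical \emph{and} equals $\theta^{k-1}(g)$. If the classical line lies in the image of $\theta^{k-1}$, it dies in the quotient, so the quotient cannot be identified with the classical eigenspace via the natural map; taken literally, your two assertions together force the cokernel to vanish and yield $e_f=e_g$, the opposite of what is wanted. What Coleman's Theorem 7.2 actually provides is only the \emph{upper} bound: every generalized eigenvector of critical slope is classical modulo the image of $\theta^{k-1}$, so the cokernel has dimension at most that of the classical generalized eigenspace, i.e.\ $e_f \leq e_g + 1$. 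The \emph{lower} bound $e_f \geq e_g+1$ --- equivalently, that $\theta^{k-1}\bigl(M_{2-k}^\dagger[I_g^\infty]\bigr)$ is a \emph{proper} subspace of $M_k^\dagger[I_f^\infty]$, which is precisely the existence of the generalized eigenform $f'$ --- is the substantive half of the statement and does not follow formally from classicality; it needs a separate input (for instance Bella\"iche's result that the weight map is ramified at $\theta$-critical points, or an argument via duality with modular symbols). Your proposal does not address this direction at all.
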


We define a deformation functor $D^{\ord}$ on the category of Artinian local $E$-algebras with residue field $E$.
For any such $E$-algebra $A$, let $D^\ord(A)$ be the set of strict equivalence classes of representations $\rho_A\colon G_\QQ \rightarrow \GL(V_A)$ deforming $\rho_g$ such that
\begin{enumerate}
 \item for all primes $\ell$ not dividing $p$, $\rho_A$ and $\rho$ are the same after restricting to the inertia subgroup at $\ell$,
 \item $\left.V_A\right|_{G_{\QQ_p}}$ has a rank $1$ unramified quotient.
\end{enumerate}
Let $D^{\ord,0}$ be the sub-functor of $D^\ord$ of deformations with the additional condition
\begin{enumerate}[resume]
 \item
$\rho_A$ has constant Hodge--Tate--Sen weights.
\end{enumerate}

It is well-known that $D^{\ord}$ and $D^{\ord,0}$ are pro-representable (\cite[\S 30 Prop.\ 3]{Ma}).

We follow a similar strategy as in Section~\ref{subsec:Galoiscohom} to compute the Zariski tangent space of $D^{\ord,0}$.
Let $\widetilde{\rho_g}\colon G_\QQ \rightarrow \GL(\widetilde{V_g})$ be in $D^{\ord,0}(E[\varepsilon]/\varepsilon^2)$ and write $\widetilde{\rho_g}=(I_2+\rho'_g\varepsilon)\rho_g$.
Then $\rho'_g$ is an element of $H^1(\Gal_\QQ, V_g \tensor V_g^\ast)\isom H^1(\Gal_\QQ, V_f\tensor V_f^\ast)$, and corresponds to $(e_{11},e_{12},e_{21},e_{22})$ in 
\[
\left( H^1(K,E) \directsum H^1(K,\psi(\psi^c)^{-1}) \directsum H^1(K,\psi^c\psi^{-1}) \directsum H^1(K,E) \right)^{\Gal(K/\QQ)}
\]
as in the proof of  Lemma~\ref{lem:restoK}.

\begin{lemma} \label{lem:Dord}\hfill
\begin{enumerate} 
 \item $e_{11}$ is unramified outside $p$ and crystalline at both $\frakp$ and $\frakp'$.
 \item $e_{12}$ is unramified outside $\frakp$ and $\left.e_{12}\right|_{G_{K_{\frakp'}}}=0$.
\end{enumerate}
\end{lemma}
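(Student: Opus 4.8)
The plan is to imitate the proof of Proposition~\ref{prop:Ddim1}: I would read off what each of the three conditions cutting out $D^{\ord,0}$ imposes on the entries $e_{11},e_{12},e_{21},e_{22}$ of $\rho'_g$, and then transport the resulting local conditions between $\frakp$ and $\frakp'$ using the $\Gal(K/\QQ)$-invariance relations \eqref{eq:inv11} and \eqref{eq:inv12}. Since $\rho_g=\rho_f\tensor\chi_\cyc^{-k+1}$ and twisting does not change the adjoint, $V_g\tensor V_g^\ast\isom V_f\tensor V_f^\ast$, so Lemma~\ref{lem:restoK} and its $\Gal(K/\QQ)$-decomposition carry over verbatim; moreover $\rho_g$ is irreducible (as $\rho_f$ is) and $\left.\rho_g\right|_{\Gal_K}\isom\mu_1\directsum\mu_2$ with $\mu_1=\psi\chi_\cyc^{-k+1}$ and $\mu_2=\psi^c\chi_\cyc^{-k+1}$, the $e_{ij}$ being the entries of $\rho'_g$ in a basis $\widetilde w_1,\widetilde w_2$ of $\widetilde{V_g}$ lifting this splitting. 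On restriction to $\Gal_{\QQ_p}=\Gal_{K_\frakp}$ the characters $\mu_1$ and $\mu_2$ have Hodge--Tate weights $1-k$ and $0$ respectively, and these are distinct because $k\geq 2$; this distinctness is the only nontrivial input. I will use throughout that the minimal-ramification clause forces $\left.\rho'_g\right|_{I_\ell}=0$ for every $\ell\nmid p$, so that all $e_{ij}$ — in particular $e_{11}$ and $e_{12}$ — are unramified outside $p$.

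For part~(1), it remains to show $e_{11}$ is crystalline at $\frakp$ and at $\frakp'$. The Hodge--Tate--Sen weights of $\left.\widetilde{\rho_g}\right|_{\Gal_{K_\frakp}}$ — the eigenvalues in $E[\varepsilon]/\varepsilon^2$ of its Sen operator — reduce modulo $\varepsilon$ to the \emph{distinct} values $1-k$ and $0$, hence are of the form $(1-k)+\varepsilon\theta_1$ and $\varepsilon\theta_2$, and this distinctness guarantees that $\theta_1,\theta_2$ are precisely the Sen-weight functionals applied to $\left.e_{11}\right|_{\Gal_{K_\frakp}}$ and $\left.e_{22}\right|_{\Gal_{K_\frakp}}$, regarded as first-order deformations of the trivial character (the off-diagonal $e_{12},e_{21}$ contribute nothing since $\varepsilon^2=0$). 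The constant-Hodge--Tate--Sen-weights condition of $D^{\ord,0}$ forces $\theta_1=\theta_2=0$, so, exactly as in Proposition~\ref{prop:Ddim1}, Sen's theorem (\cite[Cor.\ of Thm.\ 11]{Sen81}) shows $\left.e_{11}\right|_{\Gal_{K_\frakp}}$ and $\left.e_{22}\right|_{\Gal_{K_\frakp}}$ are potentially unramified, hence crystalline at $\frakp$. Finally \eqref{eq:inv11}, $e_{22}(g)=e_{11}(cgc^{-1})$, together with $c\,\Gal_{K_\frakp}\,c^{-1}=\Gal_{K_{\frakp'}}$ (as $c$ interchanges $\frakp$ and $\frakp'$), turns ``$e_{22}$ crystalline at $\frakp$'' into ``$e_{11}$ crystalline at $\frakp'$''.

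For part~(2), it remains to show $\left.e_{12}\right|_{\Gal_{K_{\frakp'}}}=0$ (which in particular makes $e_{12}$ unramified at $\frakp'$, so that it is unramified outside $\frakp$). A rank-$1$ unramified quotient of $\left.\widetilde{V_g}\right|_{\Gal_{\QQ_p}}$ reduces modulo $\varepsilon$ to one of $\left.V_g\right|_{\Gal_{\QQ_p}}=\mu_1\directsum\mu_2$; since $\mu_1$ has nonzero Hodge--Tate weight it cannot be such a quotient, so the quotient demanded by the ordinary condition reduces to $\mu_2$ and its kernel is a $\Gal_{\QQ_p}$-stable free rank-$1$ submodule $\widetilde L\subset\widetilde{V_g}$ reducing to the $\mu_1$-line $\langle \widetilde w_1\rangle$. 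Writing $\widetilde L=(E[\varepsilon]/\varepsilon^2)\,(\widetilde w_1+\lambda\varepsilon\widetilde w_2)$ with $\lambda\in E$ and imposing $\Gal_{\QQ_p}$-stability is a short matrix computation that yields $e_{21}(g)=\lambda\bigl(1-(\psi^c\psi^{-1})(g)\bigr)$ for all $g\in\Gal_{K_\frakp}$, i.e.\ $\left.e_{21}\right|_{\Gal_{K_\frakp}}$ is a coboundary and so vanishes in $H^1(\Gal_{K_\frakp},\psi^c\psi^{-1})$. Relation \eqref{eq:inv12} then identifies $\left.e_{21}\right|_{\Gal_{K_\frakp}}$ with $\left.e_{12}\right|_{\Gal_{K_{\frakp'}}}$ up to a nonzero scalar and the isomorphism $g\mapsto cgc^{-1}$, so $\left.e_{12}\right|_{\Gal_{K_{\frakp'}}}=0$ in $H^1(\Gal_{K_{\frakp'}},\psi(\psi^c)^{-1})$.

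The step I expect to require the most care — as already in Proposition~\ref{prop:Ddim1} — is the Sen-operator bookkeeping in part~(1): one must argue cleanly that ``$\widetilde{\rho_g}$ has constant Hodge--Tate--Sen weights'' is equivalent to ``$e_{11}$ and $e_{22}$ have vanishing Sen weight at $\frakp$'', where the distinctness of the two weights $0$ and $1-k$ is what prevents the off-diagonal perturbations from entering the eigenvalues of the deformed Sen operator (the same distinctness is also what forces the reduction of $\widetilde L$ in part~(2) to be the $\mu_1$-line). Everything else is formal: the minimal-ramification clause and the two $\Gal(K/\QQ)$-invariance identities of Lemma~\ref{lem:restoK}.
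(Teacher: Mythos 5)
Your proposal is correct and follows essentially the same route as the paper: the minimal-ramification clause gives unramifiedness outside $p$, the ordinarity condition kills $\left.e_{21}\right|_{\Gal_{K_\frakp}}$ (your explicit coboundary computation), the constant Hodge--Tate--Sen weight condition plus Sen's theorem gives crystallinity of the diagonal entries at $\frakp$, and the $\Gal(K/\QQ)$-invariance relations \eqref{eq:inv11} and \eqref{eq:inv12} transport these conditions to $\frakp'$. The only cosmetic difference is that the paper deduces ``$e_{22}$ crystalline at $\frakp$'' directly from the unramified rank-one quotient rather than from the Sen-operator perturbation argument; both are valid.
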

\begin{proof}
 First of all, from the minimal ramification condition in $D^{\ord,0}$, we know that all $e_{ij}$ are unramified outside $p$.
 Since $\psi^c\chi_\cyc^{-k+1}$ has $\frakp$-Hodge--Tate weight $0$, the ordinary condition in $D^{\ord,0}$ implies that $e_{22}$ is unramified (and hence crystalline) at $\frakp$ and that $\left.e_{21}\right|_{\Gal_{K_\frakp}}$ is $0$.
 Since $(e_{11},e_{12},e_{21},e_{22})$ is $\Gal(K/\QQ)$-fixed, $e_{22}$ being crystalline at $\frakp$ is equivalent to $e_{11}$ being crystalline at $c\frakp c^{-1}=\frakp'$, and $\left.e_{21}\right|_{\Gal_{K_\frakp}}=0$ is equivalent to $\left.e_{12}\right|_{\Gal_{K_{\frakp'}}}=0$.
 On the other hand, from the condition of fixed Hodge--Tate--Sen weights in $D^{\ord,0}$ we know that $e_{11}$ is crystalline at $\frakp$, by the same argument as the third paragraph of Proposition~\ref{prop:Ddim1}.
 Also $e_{12}\in H^1(K, \psi(\psi^c)^{-1})$ has to be crystalline at $\frakp'$ because the $\frakp'$-Hodge--Tate weight $k-1$ of $\psi$ is greater than the $\frakp'$-Hodge--Tate weight $0$ of $\psi^c$.

\end{proof}

\begin{proposition} \label{prop:ramdeg}
Let $\bar{\rho}_f$ be the reduction of $\rho_f$ modulo $p$.
Assume that $\bar\rho_f$ is absolutely irreducible after restricted to $\Gal_{\QQ(\mu_p)}$.
Then the following are equivalent:
\begin{enumerate}
 \item \label{cond:e_f} $e_f=2$;
 \item \label{cond:e_g} $e_g=1$;
 \item \label{cond:cohom} The restriction to decomposition group at $\frakp$
\[
H^1_{\frakp}(K,\psi(\psi^c)^{-1}) \rightarrow H^1(K_{\frakp'},\psi(\psi^c)^{-1})
\] 
is an isomorphism (equivalently, non-zero).
\end{enumerate}
\end{proposition}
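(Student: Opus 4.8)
The plan is to prove the chain $(\ref{cond:e_f}) \Leftrightarrow (\ref{cond:e_g}) \Leftrightarrow (\ref{cond:cohom})$, using Lemma~\ref{lem:ramdeg} for the first equivalence and a tangent-space computation for the second. First I would dispose of $(\ref{cond:e_f}) \Leftrightarrow (\ref{cond:e_g})$: by Lemma~\ref{lem:ramdeg} we have $e_f = e_g + 1$, and since $f$ is a non-classical (critical-slope) overconvergent form its generalized eigenspace always has dimension at least $2$ by \cite[Thm.\ 7.2]{Col96}, as recalled in the introduction; hence $e_f \geq 2$ always, and the interesting content is the upper bound $e_f \leq 2$, equivalently $e_g \leq 1$, equivalently (since $g$ is classical of weight $2-k \leq 0$, hence $e_g \geq 1$... — more carefully, $g$ corresponds to an actual point on the eigencurve so $e_g \geq 1$) $e_g = 1$. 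So $(\ref{cond:e_f})$ and $(\ref{cond:e_g})$ are literally the same statement once Lemma~\ref{lem:ramdeg} is in hand.

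Next, for $(\ref{cond:e_g}) \Leftrightarrow (\ref{cond:cohom})$, I would argue that $e_g$ equals $1 + \dim_E D^{\ord,0}(E[\varepsilon]/\varepsilon^2)$, or rather that $e_g - 1$ is computed by the relevant tangent space; the hypothesis that $\bar\rho_f$ is absolutely irreducible on $\Gal_{\QQ(\mu_p)}$ is exactly what is needed (via an $R = \TT$-type or multiplicity-one argument, cf.\ the role of the analogous hypothesis in \cite{Berg14}) to identify the generalized Hecke eigenspace $M_{2-k}^\dagger(\Gamma_1(N))[I_g^\infty]$ with the module of deformations parametrized by $D^{\ord,0}$, and in particular its $E[\varepsilon]/\varepsilon^2$-points with $D^{\ord,0}(E[\varepsilon]/\varepsilon^2)$. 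Granting this, $e_g = 1$ if and only if $\dim_E D^{\ord,0}(E[\varepsilon]/\varepsilon^2) \leq 1$. Now apply Lemma~\ref{lem:Dord}: via the isomorphism of Lemma~\ref{lem:restoK}, a class in $D^{\ord,0}(E[\varepsilon]/\varepsilon^2)$ is determined by the pair $(e_{11}, e_{12})$ with $e_{11} \in H^1_f(\Gal_K, E)$ (crystalline everywhere) and $e_{12} \in H^1_\frakp(\Gal_K, \psi(\psi^c)^{-1})$ subject to $\left.e_{12}\right|_{\Gal_{K_{\frakp'}}} = 0$, i.e.\ $e_{12} \in \ker\bigl(H^1_\frakp(K,\psi(\psi^c)^{-1}) \to H^1(K_{\frakp'},\psi(\psi^c)^{-1})\bigr)$. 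Since $H^1_f(\Gal_K,E) = 0$ by finiteness of the class group (as in Proposition~\ref{prop:Ddim1}), we get
\[
 D^{\ord,0}(E[\varepsilon]/\varepsilon^2) \isom \ker\bigl(H^1_\frakp(K,\psi(\psi^c)^{-1}) \stackrel{\loc_{\frakp'}}{\longrightarrow} H^1(K_{\frakp'},\psi(\psi^c)^{-1})\bigr).
\]
By Proposition~\ref{prop:Ddim1}, $\dim_E H^1_\frakp(K,\psi(\psi^c)^{-1}) = 1$ and $\dim_E H^1(K_{\frakp'},\psi(\psi^c)^{-1}) = 1$ (the local Euler characteristic formula is symmetric in $\frakp, \frakp'$, and $\psi(\psi^c)^{-1}$ restricted to $\Gal_{K_{\frakp'}}$ is still neither trivial nor the cyclotomic character). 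Hence this kernel is $0$ precisely when $\loc_{\frakp'}$ is nonzero, equivalently an isomorphism of one-dimensional spaces; and when the kernel is $0$ we get $e_g = 1$, while when the kernel is one-dimensional we get $e_g = 2$. This establishes $(\ref{cond:e_g}) \Leftrightarrow (\ref{cond:cohom})$, and the parenthetical "equivalently, non-zero" is immediate from the dimension count.

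The main obstacle is the identification $e_g - 1 = \dim_E D^{\ord,0}(E[\varepsilon]/\varepsilon^2)$: one must check that the generalized Hecke eigenspace on the overconvergent side really is governed by the ordinary deformation functor $D^{\ord,0}$ and not something larger, which requires knowing that $g$ (being ordinary, of non-positive weight, and with the irreducibility hypothesis on $\bar\rho_f$) sits on an $R=\TT$ patch where the local tangent space to the eigencurve is faithfully the Galois deformation tangent space — this is where the absolute irreducibility of $\left.\bar\rho_f\right|_{\Gal_{\QQ(\mu_p)}}$ is essential, ruling out extra deformations coming from congruences. I would cite \cite{Berg14} (and standard Hida-theory input, e.g.\ \cite{Ma}) for this comparison in the ordinary setting, in parallel with the critical-slope comparison used implicitly in Section~\ref{subsec:Galoiscohom}. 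The rest of the argument is the linear algebra above together with the already-proven Lemmas~\ref{lem:restoK}, \ref{lem:ramdeg}, \ref{lem:Dord} and Proposition~\ref{prop:Ddim1}.
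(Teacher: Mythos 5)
Your proposal is correct and follows essentially the same route as the paper: Lemma~\ref{lem:ramdeg} for $e_f=e_g+1$, the $R=\TT$ identification of the tangent space of the eigencurve at $g$ with $D^{\ord}(E[\varepsilon]/\varepsilon^2)$, and Lemmas~\ref{lem:restoK} and~\ref{lem:Dord} together with the vanishing of $H^1_f(K,E)$ and $H^1_f(K,\psi(\psi^c)^{-1})$ to identify $D^{\ord,0}(E[\varepsilon]/\varepsilon^2)$ with $\ker\bigl(\loc_{\frakp'}\bigr)$ inside the one-dimensional space $H^1_{\frakp}(K,\psi(\psi^c)^{-1})$. The only blemish is the sentence ``$e_g=1$ if and only if $\dim_E D^{\ord,0}(E[\varepsilon]/\varepsilon^2)\leq 1$'', which should read ``$=0$'' (the scheme-theoretic fiber of the weight map at $g$ is trivial exactly when its tangent space vanishes, which is how the paper phrases it); your subsequent dichotomy --- kernel zero gives $e_g=1$, kernel one-dimensional gives $e_g=2$ --- is consistent with the corrected statement, so this is a typo rather than a gap.
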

\begin{proof}
 The equivalence of (\ref{cond:e_f}) and (\ref{cond:e_g}) follows from Lemma~\ref{lem:ramdeg}.

The tangent space of the eigencurve at the point $g$ is isomorphic to $D^{\ord}(E[\varepsilon]/\varepsilon^2)$ by the big $R=\TT$ theorem (\cite[Thm.\ 5.29]{Hida_book}).
Hence $e_g=1$ is equivalent to $\dim_E D^{\ord,0}(E[\varepsilon]/\varepsilon^2)=0$.

 By Lemma~\ref{lem:restoK} and Lemma~\ref{lem:Dord}, we have
\[
 D^{\ord,0}(E[\varepsilon]/\varepsilon^2) \inj H^1_f(K,E) \directsum \ker\left[H^1_{\frakp}(K,\psi(\psi^c)^{-1})\rightarrow H^1(K_{\frakp'},\psi(\psi^c)^{-1})\right].
\]
 It is known that $H^1_f(K,E)$ is zero because of the finiteness of the class group of $K$.
 From the Iwasawa Main Conjecture for imaginary quadratic fields (\cite[Thm.\ 4.1]{Rub91}, \cite[Prop.\ III.1.3, III.1.4]{deS}), we have $H^1_f(K,\psi(\psi^c)^{-1}) = 0$, and hence 
\[
 H^1_{\frakp}(K,\psi(\psi^c)^{-1}) \rightarrow H^1(K_{\frakp},\psi(\psi^c)^{-1})
\]
is injective, the latter being $1$-dimensional.
 In fact, we have seen in Prop.~\ref{prop:Ddim1} that this is an isomorphism, i.e., $\dim_E H^1_{\frakp}(K,\psi(\psi^c)^{-1})=1$.
 
Hence $\dim_E D^{\ord,0}(E[\varepsilon]/\varepsilon)=0$ if and only if 
\[
 \ker\left[H^1_{\frakp}(K,\psi(\psi^c)^{-1})\rightarrow H^1(K_{\frakp'},\psi(\psi^c)^{-1})\right]=0,
\]
and if and only if $H^1_{\frakp}(K,\psi(\psi^c)^{-1}) \rightarrow H^1(K_{\frakp'},\psi(\psi^c)^{-1})$ is an isomorphism.
Since both the source and target are $1$-dimensional, this is equivalent to saying that the map $H^1_{\frakp}(K,\psi(\psi^c)^{-1}) \rightarrow H^1(K_{\frakp'},\psi(\psi^c)^{-1})$ is non-zero.
\end{proof}

\begin{remark}
 In \cite[Prop.\ 1]{Bel}, Bella\"iche used the method of reducibility ideal to conclude the same result.
\end{remark}

\section*{Acknowledgments}
The author would like to thank her advisor Barry Mazur for his constant support. 
She particularly wants to thank Alan Lauder for referring her to their paper \cite{DLR15} and providing many support on Magma computation.
The author was partially supported by the Government Scholarship to Study Abroad from Taiwan.

\end{document}